\newtheorem{theorem}{Theorem}[section]
\newtheorem*{claim*}{Claim}
\newtheorem{proposition}[theorem]{Proposition}
\newtheorem{conjecture}[theorem]{Conjecture}
\theoremstyle{definition}
\newtheorem*{definition*}{Definition}
\author
{
Anders Martinsson
}
\thanks{A.M.: Institute of Theoretical Computer Science, Department of Computer Science, ETH Z\"{u}rich, Switzerland.}
\thanks{\texttt{anders.martinsson@inf.ethz.ch}}
\author
{
Raphael Steiner
}
\thanks{R.S.: Institute for Operations Research, Department of Mathematics, ETH Z\"{u}rich, Switzerland. }
\thanks{\texttt{raphaelmario.steiner@math.ethz.ch}.
\thanks{The research of R.S. was funded by the SNSF Ambizione Grant No. 216071 of the Swiss National Science Foundation.}
}
\date{\today}
\title{Local Shearer bound}
\begin{document}
\maketitle

\begin{abstract}
We prove the following local strengthening of Shearer's classic bound on the independence number of triangle-free graphs: For every triangle-free graph $G$ there exists a probability distribution on its independent sets such that every vertex $v$ of $G$ is contained in a random independent set drawn from the distribution with probability $(1-o(1))\frac{\ln d(v)}{d(v)}$. This resolves the main conjecture raised by Kelly and Postle (2018) about fractional coloring with local demands, which in turn confirms a conjecture by Cames van Batenburg et al.~(2018) stating that every $n$-vertex triangle-free graph has fractional chromatic number at most $(\sqrt{2}+o(1))\sqrt{\frac{n}{\ln(n)}}$. Addressing another conjecture posed by Cames van Batenburg et al.,~we also establish an analogous upper bound in terms of the number of edges.

To prove these results we establish a more general technical theorem that works in a weighted setting. As a further application of this more general result, we obtain a new spectral upper bound on the fractional chromatic number of triangle-free graphs: We show that every triangle-free graph $G$ satisfies $\chi_f(G)\le (1+o(1))\frac{\rho(G)}{\ln \rho(G)}$ where $\rho(G)$ denotes the spectral radius. This improves the bound implied by Wilf's classic spectral estimate for the chromatic number by a $\ln \rho(G)$ factor and makes progress towards a conjecture of Harris on fractional coloring of degenerate graphs.
\end{abstract}

\section{Introduction}

One of the most fundamental problems in all of combinatorics concerns bounding the famous Ramsey number $R(\ell,k)$, which may be defined as the smallest number $n$ such that every graph on $n$ vertices contains either a clique of size $\ell$ or an independent set of size $k$. The first highly challenging instance of this general problem is the determination of the Ramsey-numbers $R(3,k)$, posed as a price-money question by Erd\H{o}s already back in 1961~\cite{erdos61} (see also~\href{https://www.erdosproblems.com/165}{this Erd\H{o}s problem page entry}). Currently, the best known asymptotic bounds are
$$\left(\frac{1}{4}-o(1)\right)\frac{k^2}{\ln k}\le R(3,k)\le (1+o(1))\frac{k^2}{\ln k}.$$
The lower bound was established by an analysis of the famous \emph{triangle-free process} independently by Fiz Pontiveros, Griffiths and Morris~\cite{fiz} and Bohman and Keevash~\cite{bohman} in 2013. Proving an upper bound on $R(3,k)$ is equivalent to establishing a lower bound on the \emph{independence number} $\alpha(G)$ (i.e., the size of a largest independent set in $G$) for all triangle-free graphs on $n$ vertices. It turns out that the upper bound on $R(3,k)$ stated above is a straightforward consequence of a much more general such lower bound on the independence number of triangle-free graphs, established in a landmark result by Shearer in 1983~\cite{shearer83}. 

Shearer's classic bound states that every triangle-free graph on $n$ vertices with average degree $\overline{d}$ satisfies $\alpha(G)\ge \frac{(1-\overline{d})+\overline{d}\ln \overline{d}}{(\overline{d}-1)^2}n=(1-o(1))\frac{\ln \overline{d}}{\overline{d}}n$, which improves over the easy lower bound $\alpha(G)\ge \frac{n}{\overline{d}+1}$ which follows from Tur\'{a}n's theorem by roughly a $\ln \overline{d}$-factor. In a refinement of this result, Shearer~\cite{shearer91} proved in 1991 that every triangle-free graph satisfies
$\alpha(G)\ge \sum_{v\in V(G)}g(d_G(v))$, where $d_G(v)$ denotes the degree of $v$ in $G$ and $g(d)=(1-o(1))\frac{\ln d}{d}$ is a recursively defined function. This bound is slightly better than Shearer's first bound in terms of the average degree for graphs with unbalanced degree sequences. These two classic bounds on the independence number of triangle-free graphs due to Shearer have found widespread application across many topics in extremal combinatorics and nowadays form one of its basic tools. By a result of Bollob\'{a}s~\cite{bollobas}, it is known that Shearer's bounds are tight up to a multiplicative factor of $2$. Because of the relation to the Ramsey-numbers $R(3,k)$ discussed above, any constant factor improvement of Shearer's longstanding bounds would be a major breakthrough in Ramsey theory.
Due to this, a lot of research has been devoted to finding strengthenings and generalizations of Shearer's bounds: We refer to~\cite{kang} for a recent survey covering Shearer's bound and related results in the theory of graph coloring and to~\cite{alonsparse,davies20,davies21,davies2018,davies202,dawan,pirot21} for some (mostly recent) extensions and generalizations of Shearer's bounds. 

The study of lower bounds on the independence number, such as the aforementioned bounds of Shearer, is closely connected to the theory of graph coloring. Recall that in a \emph{proper graph coloring} vertices are assigned colors such that neighboring vertices have distinct colors, and the \emph{chromatic number} $\chi(G)$ of a graph $G$ is the smallest amount of colors required to properly color $G$. It is easily seen by the definition (by considering a largest ``color class'')  that every graph $G$ on $n$ vertices has an independent set of size at least $\frac{n}{\chi(G)}$. An even stronger lower bound on the independence number is provided by the well-known \emph{fractional chromatic number} $\chi_f(G)$ of the graph.  The fractional chromatic number has many different equivalent definitions (see the standard textbook~\cite{scheinerman} on fractional coloring as a reference). Here, we shall find the following definition convenient: $\chi_f(G)$ is the minimum real number $r\ge 1$ such that there exists a probability distribution on the independent sets of $G$ such that a random independent set $I$ sampled from this distribution contains any given vertex $v\in V(G)$ with probability at least $\frac{1}{r}$. By considering the expected size of a random set drawn from such a distribution, one immediately verifies that $\alpha(G)\ge \frac{n}{\chi_f(G)}$ holds for every graph $G$. In general, the latter lower bound $\frac{n}{\chi_f(G)}$ on the independence number is stronger than the lower bound $\frac{n}{\chi(G)}$, as there are graphs (namely, the so-called Kneser graphs~\cite{baranyi,lovasz}) for which $\chi_f(G)$ is much smaller than $\chi(G)$.

Given these lower bounds of the independence number in terms of the (fractional) chromatic number, it is natural to ask whether there are analogues or strengthenings of Shearer's bounds that provide corresponding \emph{upper} bounds for the (fractional) chromatic number. A prime example of such a result is a recent breakthrough of Molloy~\cite{molloy}, who proved that $\chi(G)\le (1+o(1))\frac{\Delta}{\ln \Delta}$ for every triangle-free graph $G$ with maximum degree $\Delta$, where the $o(1)$-term vanishes as $\Delta\rightarrow \infty$. This strengthened a previous bound of the form $O\left(\frac{\Delta}{\ln \Delta}\right)$ due to Johansson~\cite{johansson} and recovers Shearer's independence number bound in the case of regular graphs in a stronger form. As with Shearer's bound, it is known that Molloy's bound is optimal up to a factor of $2$, and improving the constant $1$ to any constant below $1$ would be a major advance in the field. Several interesting strengthenings and generalizations of Johansson's and Molloy's results have been proved in the literature, see e.g.~\cite{AKS,andersonbernshteyn,anderson,bernshteynbrazel,bonamy,bradshaw,davies20,hurley,hurleypirot,pirot21} for some selected examples.

\medskip

\paragraph*{\textbf{Our results.}} In this paper, we shall be concerned with the following conjecture from 2018 posed by Kelly and Postle~\cite{KP} that claims a ``local strengthening'' of Shearer's bounds that can also be seen as a degree-sequence generalization of Molloy's bound for fractional coloring. 

\begin{conjecture}[Local fractional Shearer/Molloy, cf.~Conjecture~2.2, Proposition~1.4 in~\cite{KP}]\label{con:localshearer}
For every triangle-free graph there exists a probability distribution on its independent sets such that every vertex $v\in V(G)$ appears with probability at least $(1-o(1))\frac{\ln d_G(v)}{d_G(v)}$ in a random independent set chosen from the distribution. Here, the $o(1)$ term represents any function that tends to $0$ as the degree grows.
\end{conjecture}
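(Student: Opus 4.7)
The plan is first to reduce the conjecture, via LP duality applied to the fractional coloring linear program with local demands $f(v) = (1-o(1))\frac{\ln d_G(v)}{d_G(v)}$, to the following weighted independence number statement: for every triangle-free graph $G$ and every non-negative weighting $w\colon V(G)\to \mathbb{R}_{\ge 0}$,
\[
\alpha_w(G)\ge (1-o(1))\sum_{v\in V(G)} w(v)\,\frac{\ln d_G(v)}{d_G(v)},
\]
where $\alpha_w(G)$ denotes the maximum weight of an independent set in $G$. Indeed, a probability distribution on independent sets with marginals $\Pr[v\in I]\ge f(v)$ for every $v$ exists if and only if $\max_I \sum_{v\in I} w(v)\ge \sum_v w(v)f(v)$ holds for every $w\ge 0$, and this is exactly the weighted local Shearer inequality. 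This is presumably what the abstract refers to as the ``more general technical theorem that works in a weighted setting''.

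To prove the weighted inequality, I would employ the \emph{occupancy fraction} method of Davies, Jenssen, Perkins and Roberts, which has been used successfully to re-derive sharp Shearer-type bounds for triangle-free graphs. Concretely, one defines the \emph{hard-core measure} $\mu_\lambda$ on independent sets of $G$ with fugacity function $\lambda\colon V(G)\to \mathbb{R}_{>0}$, setting $\mu_\lambda(I)\propto \prod_{v\in I}\lambda(v)$. Choosing $\lambda(v)$ to depend on $w(v)$ and $d_G(v)$ (heuristically, something like $\lambda(v)\approx w(v)/\ln d_G(v)$ after suitable normalization), the strategy is to lower bound the expected weight $\mathbb{E}_{\mu_\lambda}\!\left[\sum_{v\in I}w(v)\right]=\sum_v w(v)\,\Pr_{\mu_\lambda}[v\in I]$ term-by-term, showing each individual marginal $\Pr_{\mu_\lambda}[v\in I]$ is at least $(1-o(1))\frac{\ln d_G(v)}{d_G(v)}$. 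This would in fact prove the conjecture \emph{directly}, by exhibiting $\mu_\lambda$ as the desired distribution (rather than invoking LP duality only as a black box).

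The key calculation at each vertex $v$ uses triangle-freeness crucially: conditioned on $v\notin I$, the restriction of $\mu_\lambda$ to $N(v)$ is a product measure, since $N(v)$ is independent in a triangle-free graph. Hence $\Pr[v\in I]$ admits a clean closed form in terms of the ``external field'' each neighbor $u\in N(v)$ experiences from its \emph{other} neighbors. One then applies a Jensen/convexity step to replace the configuration of external fields by a single averaged value, reducing the problem to an optimization essentially identical to the regular triangle-free case, where the optimal fugacity giving Shearer's constant is well understood. Implementing this smoothly in the weighted, irregular setting requires care, since the convex function one applies depends on both the weight $w(v)$ and the local degrees.

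The main obstacle, in my view, is controlling the interaction between vertices of widely different degrees in the same neighborhood. In a regular graph the occupancy fraction analysis is clean; here a vertex $v$ of degree $d(v)$ may have neighbors $u$ whose degrees range from $O(1)$ up to nearly $|V(G)|$, and we need the bound to hold vertex-by-vertex with the correct local constant $\ln d(v)/d(v)$. This forces one to design the fugacity profile so that the contribution of $v$ to each neighbor's external field is balanced against its own target marginal, and to make the Jensen step robust to very asymmetric fields. A plausible workaround is to partition vertices dyadically by degree class, carry out the analysis on each class, and reassemble via LP duality; the price is an extra multiplicative loss at the boundaries between classes, and absorbing this loss into the $(1-o(1))$ factor is, I expect, where the most technical effort lies.
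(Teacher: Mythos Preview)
Your plan diverges from the paper in two essential ways, and the decisive step is left open. First, your guess at the ``weighted technical theorem'' misses the mark: the LP-dual statement $\alpha_w(G)\ge \sum_v w(v)(1-o(1))\frac{\ln d_G(v)}{d_G(v)}$ for all $w\ge 0$ is \emph{equivalent} to the conjecture, so it cannot be the ``more general'' result the abstract refers to. The paper's Theorem~\ref{thm:aux} is of a different shape: for every strictly positive $w$ there is a distribution with $\Pr[v\in I]\ge f\bigl(w(N_G(v))/w(v)\bigr)$ for every $v$, where $f$ is Shearer's function. Here the weights modify the \emph{target marginals}, not the objective, and the unweighted conjecture is simply the case $w\equiv 1$. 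The proof uses no hard-core model. It is an induction on $|V(G)|$ wrapped in a compactness argument: one shows the set of admissible additive slacks $\delta$ is closed, assumes the infimum $\delta_0$ is positive, and contradicts minimality by exhibiting a strictly better distribution. That distribution mixes (with probability $1-\varepsilon$) a near-optimal distribution for the perturbed weighting $w'(v)=w(v)\exp\bigl(\varepsilon\, w(N_G(v))\bigr)$ with (with probability $\varepsilon$) the inductive distribution on $G-\overline{N}_G(u)$ for a $w$-random pivot $u$, together with $u$ itself. The differential equation $x(x-1)f'(x)+(x+1)f(x)=1$ satisfied by Shearer's function is exactly what makes the first-order error terms cancel; triangle-freeness enters when computing $w(\overline{N}_G(v)\cup\overline{N}_G(y))=w(N_G(v))+w(N_G(y))$ for $y\in N_G(v)$.

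As for your occupancy-fraction route: it is a natural alternative, but the obstacle you flag at the end is the entire difficulty, not a detail to be absorbed into $o(1)$. The hard-core analysis with fugacity $\lambda\sim 1/\ln d$ recovers Shearer's constant for $d$-regular triangle-free graphs, yet no fugacity profile is known that yields the sharp per-vertex marginal $(1-o(1))\frac{\ln d_G(v)}{d_G(v)}$ simultaneously across an irregular graph; Kelly and Postle's partial result, off by a constant and a $\ln\ln d$ factor, already reflects how far this style of argument had been pushed. Your dyadic bucketing sketch provides no mechanism to prevent high-degree neighbours of a low-degree vertex $v$ from depressing $\Pr[v\in I]$ below target, and you do not propose a concrete fugacity rule. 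One technical slip also suggests the picture is not yet in focus: conditioned only on $v\notin I$, the hard-core measure restricted to $N(v)$ is \emph{not} a product measure; it becomes one only after further conditioning on the configuration outside $\overline{N}_G(v)$.
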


To see that this conjecture indeed forms a strengthening of Shearer's bounds, note that the expected size of a random independent set drawn from a distribution as given by the conjecture is 
$$\sum_{v\in V(G)}(1-o(1))\frac{\ln d_G(v)}{d_G(v)},$$
which recovers Shearer's second (stronger) lower bound~\cite{shearer91} on the independence number up to lower-order terms. But on top of that, and this explains the word ``local'' in the name of the conjecture, the distribution in Conjecture~\ref{con:localshearer} guarantees that \emph{every} vertex can be expected to be contained in the random independent set a good fraction of the time (and lower degree vertices are contained proportionally more frequently). This relates back to the previously discussed fractional chromatic number, and, for instance, directly implies that $\chi_f(G)\le (1+o(1))\frac{\Delta(G)}{\ln \Delta(G)}$ for every triangle-free graph, which recovers the fractional version of Molloy's bound. 

Adding to that, Conjecture~\ref{con:localshearer} connects to several other notions of graph coloring discussed in detail by Kelly and Postle, see in particular~\cite[Proposition 1.4]{KP} which provides many different equivalent formulations of Conjecture~\ref{con:localshearer}. One of these involves the notion of \emph{fractional coloring with local demands} introduced by Dvo\v{r}\'{a}k, Sereni and Volec~\cite{dvorak}. Following Kelly and Postle~\cite{KP}, given a graph $G$ and a so-called \emph{demand function} $h:V(G)\rightarrow [0,1]$ that assigns to each vertex its individual ``demand'', and \emph{$h$-coloring} of a graph $G$ is a mapping $c:V(G)\rightarrow 2^{[0,1]}$ that assigns to every vertex $v\in V(G)$ a measurable subset $c(v)\subseteq [0,1]$ of measure at least $h(v)$, in such a way that adjacent vertices in $G$ are assigned disjoint subsets. Since the function $h$ does not have to be constant but can depend on local information concerning the vertex $v$ in $G$, this setting extends the usual paradigm of graph coloring in a local manner. Kelly and Postle~\cite[Proposition~1.4]{KP} proved that Conjecture~\ref{con:localshearer} is equivalent to the statement that every triangle-free graph has an $h$-coloring, where $h:V(G)\rightarrow [0,1]$ is a function depending only on the vertex-degrees such that $h(v)=(1-o(1))\frac{\ln d_G(v)}{d_G(v)}$. We refer to the extensive introduction of~\cite{KP} for further applications of the conjecture. 

In one of their main results, Kelly and Postle~\cite[Theorem 2.3]{KP} proved a relaxation of Conjecture~\ref{con:localshearer}, replacing the bound $(1-o(1))\frac{\ln d_G(v)}{d_G(v)}$ with the asymptotically weaker $\left(\frac{1}{2e}-o(1)\right)\frac{\ln d_G(v)}{d_G(v) \ln \ln d_G(v) }$.
As the first main result of this paper, we fully resolve Conjecture~\ref{con:localshearer}.

\begin{theorem}\label{thm:main2}
For every triangle-free graph $G$ there exists a probability distribution $\mathcal{D}$ on the independent sets of $G$ such that $$\mathbb{P}_{I\sim D}[v\in I]\ge (1-o(1))\frac{\ln(d_G(v))}{d_G(v)}$$ for every $v\in V(G)$. Here the $o(1)$ represents a function of $d_G(v)$ that tends to $0$ as the degree grows.
\end{theorem}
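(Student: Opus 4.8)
The plan is to prove a more general weighted statement (as the authors indicate they do) and then specialize. I would fix a triangle-free graph $G$ and attempt to build the distribution $\mathcal{D}$ via a variant of the \emph{hard-core model} or, more robustly, via an \emph{entropy compression / local occupancy} argument refined to give local guarantees. The cleanest route, following the Davies–Jenssen–Perkins–Roberts occupancy-method philosophy combined with Molloy's local-structure analysis, is to run a random greedy / semi-random "wasteful" colouring-type process in rounds: in each round, activate each vertex independently with some small probability, keep activated vertices that form an independent set (and, crucially, whose neighbourhoods are not too atypical), and colour them. One then tracks, for each vertex $v$, the probability $q_v^{(t)}$ that $v$ is still uncoloured after $t$ rounds, and the key invariant to maintain is that this decays like $(1-\Theta(\ln d_G(v)/d_G(v)))^t$ while the "effective degree" of $v$ into the still-uncoloured graph shrinks proportionally, so that the per-round success probability stays bounded below by $(1-o(1))\ln d_G(v)/d_G(v)$ throughout. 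Averaging over colours then yields an $h$-colouring with $h(v) = (1-o(1))\ln d_G(v)/d_G(v)$, which by the Kelly–Postle equivalence \cite[Proposition~1.4]{KP} is exactly the desired distribution on independent sets.

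Concretely, I would carry out the following steps. First, reduce to the case where every degree $d_G(v)$ is at least some large constant $d_0$ (vertices of bounded degree contribute only an $o(1)$-fraction of the demand and can be handled by padding the colour space, or by a separate easy argument using Turán-type greediness). Second, set up the weighted generalisation: assign to each vertex a target demand $h(v)$ and prove by a potential-function / second-moment argument that the semi-random process can be coupled across rounds so that the probability of $v$ surviving all rounds is at most $1 - (1-o(1))\ln d_G(v)/d_G(v) \cdot (\text{number of rounds})$ — in fact it is cleaner to use a single round of a fractional/real-valued "colour assignment" and argue that each $v$ receives, in expectation, a set of colours of measure $(1-o(1))\ln d_G(v)/d_G(v)$, using the triangle-free condition to control the probability that a colour is "blocked" by a neighbour via the independence-of-second-neighbourhood phenomenon that powers Shearer's and Molloy's bounds. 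Third, concentrate: show the measure of $v$'s received colour set is concentrated around its mean using a bounded-differences / Azuma argument adapted to the dependency structure (here the triangle-free hypothesis is what keeps the Lipschitz constants small, since a neighbour's choice affects $v$ but neighbours are pairwise non-adjacent). Fourth, deterministically patch: pick an outcome where no vertex is badly below its mean, and for the exceptional vertices (those that got unlucky) top up their colour sets greedily using the slack built into the $(1-o(1))$ factor. Finally, invoke the Kelly–Postle equivalence to translate the $h$-colouring back into the claimed probability distribution on independent sets, completing the proof of Theorem~\ref{thm:main2}.

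The main obstacle I anticipate is obtaining the \emph{sharp} constant $1$ in front of $\ln d_G(v)/d_G(v)$ \emph{simultaneously for all vertices}, i.e.\ making the argument genuinely local. A naive iterated semi-random process loses constant factors in each round (this is exactly why Kelly–Postle only got $\tfrac{1}{2e}$), so the delicate point is to design the process — or the direct fractional assignment — so that the "wastage" is $o(1)$ rather than a constant fraction, uniformly over the whole range of degrees appearing in $G$. This requires a careful choice of the activation probabilities as functions of $d_G(v)$ and a tight analysis of the probability that a given colour is available to $v$, which in the triangle-free setting reduces to estimating $\prod_{u \sim v}(1 - (\text{prob.\ }u\text{ takes that colour}))$ and showing it is close to $e^{-\sum_u (\cdots)} \approx 1/d_G(v)$; controlling the error in this exponential approximation, together with the variance coming from vertices of wildly differing degrees in a single neighbourhood, is where the bulk of the technical work lies. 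A secondary obstacle is bookkeeping the $o(1)$ terms so that they depend only on $\min_v d_G(v)$ (or on each individual $d_G(v)$) and not on $n$ or on global quantities like $\Delta(G)$ — this is what distinguishes the "local" statement from the known global ones and forces one to run the entire argument with degree-dependent parameters.
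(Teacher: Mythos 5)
Your plan goes a genuinely different route from the paper, and—more importantly—it does not actually close the gap you yourself identify. You propose running a semi-random (nibble/wasteful-colouring) process in rounds, tracking survival probabilities, concentrating via Azuma, and then invoking the Kelly--Postle equivalence. But you explicitly acknowledge that the whole difficulty is making the per-round ``wastage'' $o(1)$ rather than a constant fraction, that this is ``exactly why Kelly--Postle only got $\tfrac{1}{2e}$,'' and that this is ``where the bulk of the technical work lies.'' You never say \emph{how} to eliminate that constant-factor loss; you only name it as the obstacle. In other words, the proposal reproduces the known barrier of~\cite{KP} without a mechanism to break it. Since the iterated semi-random method has a structural tendency to compound multiplicative errors across rounds, there is no reason to believe the plan as stated would yield the sharp constant $1$, and the concentration/patching steps in your Step 3--4 do nothing to address this: they control fluctuations around a mean that is itself off by a constant.

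The paper avoids the nibble framework entirely. It proves a vertex-weighted generalization (Theorem~\ref{thm:aux}, with bound $f\bigl(w(N_G(v))/w(v)\bigr)$ for Shearer's function $f$) by induction on $|V(G)|$ via a bootstrap-by-contradiction: one looks at the smallest slack $\delta_0$ for which the weighted statement holds uniformly over all weight functions, assumes $\delta_0>0$, picks a nearly worst-case weight $w$, and then constructs an improved distribution by mixing (i) the $\delta_0$-slack distribution for the \emph{exponentially perturbed} weights $w'(v)=w(v)\exp(\varepsilon w(N_G(v)))$ with (ii) a distribution that roots a $w$-random vertex $u$ and appends, by induction, an independent set of $G-\overline{N}_G(u)$. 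Triangle-freeness enters exactly once---for $y\in N_G(v)$ the sets $N_G(v)$ and $N_G(y)$ are disjoint---and the calculation is closed using convexity of $f$ and the ODE $x(x-1)f'(x)+(x+1)f(x)=1$, yielding $\delta_0-\delta>\varepsilon\delta_0-2\varepsilon^2$ and hence a contradiction for $\varepsilon=\delta_0/4$. This is a ``lossless'' amortized argument with no rounds, no concentration, and no patching; the sharp constant comes for free from the exact ODE that $f$ satisfies, which is precisely the ingredient missing from your plan. Theorem~\ref{thm:main2} then follows by taking $w\equiv 1$.

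If you want to pursue your route, the concrete missing idea is a replacement for the round-by-round loss control; but note that even Molloy's proof~\cite{molloy} of the $\Delta$-version uses entropy compression rather than a plain nibble to avoid exactly this, and adapting that to simultaneous local guarantees at all degree scales is unresolved by your sketch. As written, the proposal is a research programme rather than a proof.
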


A pleasing consequence of Theorem~\ref{thm:main2} is that it can also be used to fully resolve another conjecture about fractional coloring raised in 2018 by Cames van Batenburg, de Joannis de Verclos, Kang and Pirot~\cite{cames}: Motivated by the aforementioned problem of estimating the Ramsey-number $R(3,k)$, in 1967 Erd\H{o}s asked the fundamental question of determining the maximum chromatic number of triangle-free graphs. An observation of Erd\H{o}s and Hajnal~\cite{erdoshajnal} combined with Shearer's bound implies an upper bound $(2\sqrt{2}+o(1)))\sqrt{\frac{n}{\ln n}}$ for this problem. In recent work of Davies and Illingworth~\cite{freddie}, this upper bound was improved by a $\sqrt{2}$-factor to the current state of the art $(2+o(1))\sqrt{\frac
{n}{\ln n}}$. The current best lower bound for this quantity is $(1/\sqrt{2}-o(1))\sqrt{\frac{n}{\ln n}}$, coming from the aforementioned lower bounds on $R(3,k)$~\cite{fiz,bohman}. 

Cames van Batenburg et al.~\cite{cames} recently studied the natural analogue of this question for fractional coloring and made the following conjecture. 
\begin{conjecture}[cf.~Conjecture~4.3 in~\cite{cames}]\label{con:fractionalcoloring}
As $n\rightarrow \infty$, every triangle-free graph on $n$ vertices has fractional chromatic number at most $(\sqrt{2}+o(1))\sqrt{\frac{n}{\ln n}}$.  
\end{conjecture}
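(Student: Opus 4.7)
We derive \Cref{con:fractionalcoloring} from \Cref{thm:main2} by mixing two complementary distributions on the independent sets of a triangle-free graph $G$.

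First, let $\mathcal{D}_1$ be the distribution afforded by \Cref{thm:main2}, so that $\mathbb{P}_{I\sim\mathcal{D}_1}[v\in I]\ge (1-o(1))\frac{\ln d_G(v)}{d_G(v)}$, which is strong when $d_G(v)$ is small. Second, define $\mathcal{D}_2$ by sampling a vertex $u\in V(G)$ uniformly at random and outputting $I:=N_G(u)$; the set $I$ is independent because $G$ is triangle-free, and a direct calculation gives $\mathbb{P}_{I\sim\mathcal{D}_2}[v\in I]=\frac{d_G(v)}{n}$, which is strong when $d_G(v)$ is large.

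Now consider the equal mixture $\mathcal{D}=\tfrac12\mathcal{D}_1+\tfrac12\mathcal{D}_2$. For every $v\in V(G)$ of sufficiently large degree,
$$\mathbb{P}_{I\sim\mathcal{D}}[v\in I]\;\ge\;\frac{(1-o(1))\ln d_G(v)}{2\,d_G(v)}\;+\;\frac{d_G(v)}{2n}.$$
An elementary calculus exercise shows that the real-valued function $\phi(d):=\frac{\ln d}{2d}+\frac{d}{2n}$ attains its minimum at $d^\ast=(1+o(1))\sqrt{n\ln n/2}$, with both summands contributing equally and $\phi(d^\ast)=(1+o(1))\sqrt{\ln n/(2n)}$; the equal balancing of the two summands at $d^\ast$ is the combinatorial source of the $\sqrt{2}$-factor. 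Because $d^\ast\to\infty$ with $n$, the $o(1)$ inherited from \Cref{thm:main2} is also $o(1)$ in $n$, and we conclude $\chi_f(G)\le (\sqrt{2}+o(1))\sqrt{n/\ln n}$.

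The only remaining point is that the bound from \Cref{thm:main2} becomes vacuous for vertices of very small degree, most critically leaves where $\ln d_G(v)/d_G(v)=0$. This is a routine technicality: letting $L\subseteq V(G)$ collect all vertices of degree below some fixed constant $D_0$, the induced subgraph $G[L]$ has bounded maximum degree and hence admits a fractional coloring with a constant number of colors. This yields a distribution $\mathcal{D}_3$ on independent sets of $G$ with $\mathbb{P}_{I\sim\mathcal{D}_3}[v\in I]=\Omega(1)$ for every $v\in L$. Folding $\mathcal{D}_3$ into the main mixture with weight $\gamma=\Theta(\sqrt{\ln n/n})=o(1)$ covers the vertices in $L$ at the required probability level while perturbing the analysis on $V(G)\setminus L$ by only a $(1-o(1))$-factor. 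The mixture idea and the short calculus argument above constitute the entire content of the derivation; there is no hidden obstacle beyond \Cref{thm:main2} itself, which is assumed.
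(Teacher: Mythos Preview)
Your argument is correct and is, at its core, the same as the paper's. The two-distribution mixture you build—combining the distribution from \Cref{thm:main2} with the ``random neighborhood'' distribution $I=N_G(u)$ for a uniform vertex $u$—is exactly the mechanism behind Kelly and Postle's Proposition~5.2, which the paper invokes as a black box (stated there as \Cref{prop:KP}). You have simply unpacked that black box and carried out the optimization explicitly. The paper's own proof of \Cref{thm:edgebound} uses precisely your explicit-mixture style, so the underlying idea is identical.

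The one point where your write-up genuinely diverges is the treatment of small-degree vertices. The paper passes to the $D$-core of $G$ and then reattaches the deleted vertices via the inequality $\chi_f(G)\le\max\{\chi_f(G-v),\,d_G(v)+1\}$. You instead keep all of $G$ and fold in a third distribution $\mathcal{D}_3$, supported on independent sets of the bounded-degree graph $G[L]$, with weight $o(1)$. Both devices are valid; yours has the mild advantage of staying within the single-distribution framework throughout, while the paper's $D$-core argument is perhaps more modular.
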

In one of their main results~\cite[Theorem~1.4]{cames}, Cames van Batenburg et al. proved the fractional version of the result of Davies an Illingworth, namely an upper bound $(2+o(1))\sqrt{\frac{n}{\ln n}}$ on the fractional chromatic number.
Using a connection between Conjectures~\ref{con:localshearer} and~\ref{con:fractionalcoloring} proved by Kelly and Postle~\cite[Proposition~5.2]{KP}, we are able to confirm Conjecture~\ref{con:fractionalcoloring} too. 
\begin{theorem}\label{thm:main3}
The maximum fractional chromatic number among all $n$-vertex triangle-free graphs is at most $$(\sqrt{2}+o(1))\sqrt{\frac{n}{\ln(n)}}.$$ 
\end{theorem}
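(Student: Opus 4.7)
The plan is that the difficult work is already concentrated in Theorem~\ref{thm:main2}, and Theorem~\ref{thm:main3} follows by feeding the local Shearer distribution into the reduction of Kelly and Postle~\cite[Proposition~5.2]{KP} that converts such a distribution directly into an upper bound on the fractional chromatic number. Concretely, I would apply Theorem~\ref{thm:main2} to a given triangle-free graph $G$ on $n$ vertices to obtain a probability distribution $\mathcal{D}$ on the independent sets of $G$ with $\mathbb{P}_{I\sim\mathcal{D}}[v\in I]\ge (1-o(1))\frac{\ln d_G(v)}{d_G(v)}$ for every vertex $v$ of sufficiently large degree, and then plug this into the Kelly--Postle reduction.

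The idea of the reduction is to build a fractional coloring of $G$ as a combination of two kinds of independent sets: (i)~samples from $\mathcal{D}$, scaled so that their total mass is $\lambda$; and (ii)~for each $u\in V(G)$, the neighborhood $N_G(u)$, which is an independent set precisely because $G$ is triangle-free, taken with mass $\beta$. Because $v\in N_G(u)$ for exactly $d_G(v)$ choices of $u$, the total coverage of any vertex $v$ is at least
$$\lambda\,(1-o(1))\,\frac{\ln d_G(v)}{d_G(v)} + \beta\, d_G(v),$$
while the total weight of the combination is $\lambda + n\beta$. The remaining task is then to choose $\lambda$ and $\beta$ so as to minimize $\lambda+n\beta$ subject to the coverage constraint $\lambda\,\frac{\ln d}{d}+\beta\, d\ge 1$ for every relevant degree $d$. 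This is a routine one-variable optimization: differentiating shows that the binding constraint sits at a balance point $d^{\ast}$ satisfying $(d^{\ast})^2 \approx n\ln d^{\ast}$, hence $d^{\ast}\approx\sqrt{n\ln n/2}$, and at this point the optimal choice $\lambda\approx d^{\ast}/(2\ln d^{\ast})$, $\beta\approx 1/(2d^{\ast})$ yields $\lambda+n\beta=(\sqrt{2}+o(1))\sqrt{n/\ln n}$, which is exactly the bound on $\chi_{f}(G)$ claimed in Theorem~\ref{thm:main3}.

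A small loose end is that the lower bound from Theorem~\ref{thm:main2} is only meaningful once $d_G(v)$ is large enough for the $(1-o(1))$ factor to be close to $1$. I would handle this by setting aside the set $V_0$ of vertices of degree below some slowly growing threshold $T_0$ and fractionally coloring $G[V_0]$ separately: since $G[V_0]$ has maximum degree less than $T_0$, it can be fractionally colored with at most $T_0$ extra independent sets of $G$, which is absorbed into the $o(1)$ error term whenever $T_0=o(\sqrt{n/\ln n})$. The main obstacle here is not in this deduction at all, but entirely in Theorem~\ref{thm:main2}; the only mild care still required in this step is to verify that the worst-case $d$ in the constraint $\lambda\frac{\ln d}{d}+\beta d\ge 1$ really is located near $d^{\ast}$ and not at some small or extreme degree, and that the $o(1)$ slack inherited from Theorem~\ref{thm:main2} does not damage the constant $\sqrt{2}$.
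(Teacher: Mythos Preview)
Your proposal is correct and follows essentially the same approach as the paper: both combine Theorem~\ref{thm:main2} with the Kelly--Postle reduction \cite[Proposition~5.2]{KP}, the only difference being that the paper quotes that proposition as a black box whereas you unpack its proof (the $\lambda\cdot\mathcal{D}$ plus $\beta\cdot N_G(u)$ mixture and the optimization in $d$). The treatment of low-degree vertices also differs cosmetically---the paper passes to the $D$-core and uses $\chi_f(G)\le\max\{\chi_f(G-v),d_G(v)+1\}$, while you color $G[V_0]$ separately---but both devices accomplish the same thing and contribute only to the $o(1)$ term.
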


We also prove a similar upper bound on the fractional chromatic number of triangle-free graphs in terms of the number of edges, as follows.

\begin{theorem}\label{thm:edgebound}
The maximum fractional chromatic number among triangle-free graphs with $m$ edges is at most
 $$(18^{1/3}+o(1)) \frac{m^{1/3}}{(\ln m)^{2/3}}.$$   
\end{theorem}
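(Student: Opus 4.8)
The plan is to deduce the edge bound from Theorem~\ref{thm:main2} by a degree-balancing argument analogous to how Shearer's average-degree bound follows from his degree-sequence bound. Let $G$ be a triangle-free graph with $m$ edges, and fix a distribution $\mathcal{D}$ on the independent sets of $G$ as guaranteed by Theorem~\ref{thm:main2}. The key point is that the function $d\mapsto \frac{\ln d}{d}$ is decreasing for $d\ge 3$, so low-degree vertices are covered frequently and only the high-degree vertices are problematic. More precisely, we want to show $\chi_f(G)\le (18^{1/3}+o(1))\frac{m^{1/3}}{(\ln m)^{2/3}}$, i.e.\ we want every vertex to appear with probability at least $(1-o(1))\frac{(\ln m)^{2/3}}{18^{1/3}m^{1/3}}$ in a random independent set from a suitable distribution. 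A first naive attempt is to use $\mathcal{D}$ directly: a vertex $v$ with $d_G(v)=d$ appears with probability $\ge(1-o(1))\frac{\ln d}{d}$, and this is at least $(1-o(1))\frac{(\ln m)^{2/3}}{18^{1/3}m^{1/3}}$ precisely when $d\lesssim (18)^{1/3} m^{1/3}(\ln m)^{1/3}\cdot(\text{correction})$; one should optimize the constant by comparing $\frac{\ln d}{d}$ with the target threshold. Write $D_0$ for the largest degree for which the bound from $\mathcal{D}$ already suffices; all vertices of degree $\le D_0$ are fine.

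It remains to handle the set $H$ of vertices of degree $> D_0$. Since $\sum_{v} d_G(v) = 2m$, we have $|H| \cdot D_0 < 2m$, so $|H| < 2m/D_0$, which is of order $m^{2/3}/(\ln m)^{1/3}$ — a genuinely small set. For these vertices I would use a cheap, uniform covering distribution: the graph $G[H]$ (or even just $G$ restricted to $H$) is triangle-free, hence by Shearer's theorem, or simply by Tur\'an/greedy plus the fact that an $N$-vertex triangle-free graph has an independent set of size $\ge \sqrt{N}$, one can fractionally color $H$ with few colors; concretely a uniform random maximal independent set, or iterating an independent set of size $\Omega(\sqrt{|H|})$, gives a distribution on independent sets of $G[H]$ in which each vertex of $H$ appears with probability $\Omega(|H|^{-1/2}) = \Omega\!\big(\sqrt{D_0/m}\big) = \Omega\!\big((\ln m)^{1/6} m^{-1/6}\big)$. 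This is comfortably larger than the target probability $\Theta((\ln m)^{2/3} m^{-1/3})$ when $m$ is large. Finally, take the distribution that with probability $1-\lambda$ samples from $\mathcal{D}$ and with probability $\lambda$ samples from this auxiliary distribution on independent sets of $G[H]$ (viewed as independent sets of $G$), for a small constant or slowly-vanishing $\lambda$; then every vertex of degree $\le D_0$ is still covered with probability $(1-\lambda)(1-o(1))\frac{\ln d}{d}$ and every vertex of $H$ is covered with probability at least $\lambda\cdot\Omega(|H|^{-1/2})$. Choosing $\lambda$ (and hence $D_0$, re-optimized) to balance the two constraints gives the claimed bound; one must check that the $\sqrt{2}$ and the geometry of the optimization produce exactly the constant $18^{1/3}$.

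The main obstacle is pinning down the constant $18^{1/3}$ rather than merely an $O(m^{1/3}/(\ln m)^{2/3})$ bound. This requires the optimization to be carried out carefully: one should not fix $D_0$ first and then $\lambda$, but rather set up the two inequalities — ``covered by $\mathcal{D}$'' needs $(1-o(1))\frac{\ln d}{d}\ge \theta$ for $d\le D_0$, and ``covered by the auxiliary distribution'' needs $\lambda/\sqrt{2m/D_0}\ge \theta$ — and maximize the guaranteed probability $\theta$ subject to $\lambda\le 1$ and the relation between $D_0$ and $\theta$, treating $\ln D_0 \sim \tfrac13\ln m$. A short Lagrange-multiplier / AM-GM computation then yields the optimal split and the constant. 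A secondary point to be careful about is that all the $o(1)$ error terms (from Theorem~\ref{thm:main2}, which are functions of $d_G(v)$) must be uniformly controlled: since the relevant degrees $D_0$ tend to infinity with $m$, the errors do vanish, but this should be stated. Everything else — triangle-freeness passing to subgraphs, the $\sqrt{N}$ independent-set bound, mixing two distributions — is routine.
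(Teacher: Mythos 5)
Your proposal does not match the paper's proof and, more importantly, contains a gap that prevents it from reaching the claimed constant.

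First, a concrete calculation error with a real consequence: with $D_0 \sim m^{1/3}(\ln m)^{1/3}$ you have $|H|\lesssim 2m/D_0 \sim m^{2/3}/(\ln m)^{1/3}$, so $|H|^{-1/2}\sim m^{-1/3}(\ln m)^{1/6}$, not $m^{-1/6}(\ln m)^{1/6}$. Comparing to the target $\theta\sim 18^{-1/3}m^{-1/3}(\ln m)^{2/3}$, the cover probability from the $\alpha\ge\sqrt{N}$/Erd\H{o}s--Hajnal auxiliary distribution is actually \emph{smaller} than $\theta$ by a factor of order $(\ln m)^{1/2}$, so the claim that it is ``comfortably larger than the target'' is false, and the construction as written does not even give $O(m^{1/3}/(\ln m)^{2/3})$. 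You would need to upgrade the auxiliary distribution to one witnessing $\chi_f(G[H])\le O(\sqrt{|H|/\ln|H|})$, e.g.~from Theorem~\ref{thm:main3} or Cames van Batenburg et al.

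Second, even after this fix, the two-way mixture approach does not produce the constant $18^{1/3}$. Writing $D_0=A\,m^{1/3}(\ln m)^{1/3}$ and carrying out the optimization over $\lambda$ and $A$ (using the sharpest available auxiliary bound $\chi_f(G[H])\le(\sqrt{2}+o(1))\sqrt{|H|/\ln|H|}$, and $\ln D_0\sim\tfrac13\ln m$, $\ln|H|\sim\tfrac23\ln m$), the two constraints become $\theta\le(1-\lambda)/(3A)$ and $\theta\le\lambda\sqrt{A/6}$ in units of $m^{-1/3}(\ln m)^{2/3}$, and the optimal value is $\theta_{\max}=\tfrac{2}{3}6^{-2/3}$, giving $\chi_f\le\tfrac{3}{2}6^{2/3}\,m^{1/3}/(\ln m)^{2/3}\approx 4.95\,m^{1/3}/(\ln m)^{2/3}$, which is nearly twice the target $18^{1/3}\approx 2.62$. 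So pinning down the constant is not a matter of being ``careful with the optimization''; the underlying decomposition is lossy.

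The paper's argument is genuinely different: it invokes the weighted Theorem~\ref{thm:aux} twice — once with uniform weights $w_1\equiv 1$ (good when $d_G(v)$ is small), once with degree weights $w_2(v)=d_G(v)$ (good when the neighbors of $v$ have small average degree $S_G(v)/d_G(v)$) — and mixes these with a third distribution that takes the neighborhood of a degree-biased random vertex (good when $S_G(v)$ is large, i.e., neighbors have high degree). The key is that the three per-vertex guarantees $\tfrac{\ln(m^{1/3})}{d_G(v)}$, $\tfrac{\ln(m^{1/3})}{S_G(v)/d_G(v)}$, $\tfrac{S_G(v)}{2m}$ have a geometric mean $\bigl((\ln m^{1/3})^2/(2m)\bigr)^{1/3}$ that is independent of both $d_G(v)$ and $S_G(v)$, and AM--GM then yields exactly the constant $18^{1/3}$. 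Your approach cannot replicate this because the second parameter $S_G(v)$, which governs how ``cheap'' a high-degree vertex is to cover via a random neighborhood, is invisible to the degree-threshold decomposition. If you want to fix the proposal, the missing idea is to use the neighborhood-of-a-random-vertex distribution for the high-degree regime (rather than a generic fractional coloring of $G[H]$) and to exploit the weighted version of the result so that the three bounds can be balanced vertex by vertex rather than globally.
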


Theorem~\ref{thm:edgebound} comes very close to confirming another conjecture of Cames van Batenburg et al.~\cite[Conjecture~4.4]{cames}, stating that every triangle-free graph with $m$ edges has fractional chromatic number at most $(16^{1/3}+o(1))m^{1/3}/( \ln m)^{2/3}.$ In fact, we suspect that the constant $16^{1/3}$ in this conjecture may be due to a miscalculation on their end. In particular, the authors of~\cite{cames} claim that their conjectured bound on the fractional chromatic number can be verified in the special case of $d$--regular triangle--free graphs using the upper bound $\chi_f(G)\leq \min\left((1+o(1))d/\ln d, n/d  \right).$ However, assuming $n=(1+o(1))d^2/\ln d$ and thus $m=(1/2+o(1))d^3/\ln d$, this upper bound simplifies to $(1+o(1))d/\ln d = (1+o(1))(2m)^{1/3}/(\ln m^{1/3})^{2/3}=(1+o(1))(18m)^{1/3}/(\ln m)^{2/3}$.

\medskip

To prove our main result, Theorem~\ref{thm:main2}, in Section~\ref{sec:proof} we establish a key technical result that goes beyond Theorem~\ref{thm:main2} and generalizes it to a vertex-weighted setting (Theorem~\ref{thm:aux}). As a consequence of this more general result, we obtain a new spectral upper bound on the fractional chromatic number of triangle-free graphs, stated below. In the following, as is standard, $\rho(G)$ denotes the \emph{spectral radius} of $G$, i.e., the spectral radius of its adjacency matrix. 
\begin{theorem}\label{thm:main4}
Every triangle-free graph $G$ satisfies $$\chi_f(G)\le (1+o(1))\frac{\rho(G)}{\ln \rho(G)},$$ where the $o(1)$ term represents a function of $\rho(G)$ that tends to $0$ as $\rho(G)$ grows.
\end{theorem}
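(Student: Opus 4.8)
The plan is to derive Theorem~\ref{thm:main4} as a short corollary of the weighted technical result Theorem~\ref{thm:aux}, by applying it with the Perron eigenvector of $G$ as its weight function. For this I will use that Theorem~\ref{thm:aux} provides, for every triangle-free graph $G$ and every positive weight function $w\colon V(G)\to(0,\infty)$, a probability distribution $\mathcal{D}$ on the independent sets of $G$ such that $\mathbb{P}_{I\sim\mathcal{D}}[v\in I]\ge(1-o(1))\frac{\ln D_w(v)}{D_w(v)}$ for every $v\in V(G)$, where $D_w(v)=\frac{1}{w_v}\sum_{u\in N_G(v)}w_u$ is the $w$-weighted degree and the $o(1)$ depends only on $D_w(v)$; this is what makes Theorem~\ref{thm:aux} a generalization of Theorem~\ref{thm:main2}, which is the case $w\equiv 1$ (so that $D_w(v)=d_G(v)$).

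First I would perform the standard reductions. Since $\chi_f(G)$ equals the maximum of $\chi_f$ over the connected components of $G$, since a component $H$ with $\rho(H)=O(1)$ has $\Delta(H)=O(1)$ (using $\rho(H)\ge\sqrt{\Delta(H)}$) and hence $\chi_f(H)\le\Delta(H)+1=O(1)$, and since $t\mapsto t/\ln t$ is increasing and unbounded, it suffices to prove the bound for a connected triangle-free graph $G$ with $\rho:=\rho(G)$ large. Let $x=(x_v)_{v\in V(G)}$ be the Perron eigenvector of the adjacency matrix $A(G)$, which is strictly positive by the Perron--Frobenius theorem (as $A(G)$ is irreducible for connected $G$). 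The key algebraic point is that for every vertex $v$,
$$D_x(v)=\frac{1}{x_v}\sum_{u\in N_G(v)}x_u=\frac{(A(G)x)_v}{x_v}=\rho,$$
so that with respect to the weights $x$ every single vertex has weighted degree exactly $\rho$, irrespective of how unbalanced the actual degree sequence of $G$ is or how large $\rho$ is compared to individual degrees.

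Now I would apply Theorem~\ref{thm:aux} with $w=x$. This produces a probability distribution $\mathcal{D}$ on the independent sets of $G$ with $\mathbb{P}_{I\sim\mathcal{D}}[v\in I]\ge(1-o(1))\frac{\ln\rho}{\rho}$ for every $v\in V(G)$, where the $o(1)$ is a function of $\rho$ tending to $0$ as $\rho\to\infty$. By the probabilistic definition of the fractional chromatic number recalled in the introduction, the existence of such a distribution certifies that $\chi_f(G)\le\bigl((1-o(1))\tfrac{\ln\rho}{\rho}\bigr)^{-1}=(1+o(1))\frac{\rho}{\ln\rho}$, completing the proof.

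The step I expect to require the most care is confirming that Theorem~\ref{thm:aux} genuinely accepts the Perron eigenvector as a legal weight function. The entries of $x$ can differ by arbitrarily large factors across $G$ (for instance on long paths or broom-like graphs), so if Theorem~\ref{thm:aux} were burdened with a hypothesis bounding the global spread $\max_v w_v/\min_v w_v$, this application would be illegitimate and one would be forced to truncate and renormalize the weights while controlling the induced distortion of the quantities $D_w(v)$. The reason this is not an issue -- and the feature I would make sure is reflected in the statement of Theorem~\ref{thm:aux} -- is that its proof is genuinely local: only the ratios $w_u/w_v$ across edges enter, and for the Perron weights these satisfy $x_u\le\rho x_v$ whenever $u\sim v$, so they are automatically under control. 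A related subtlety worth pinning down is that the $o(1)$ in Theorem~\ref{thm:aux} must be a function of $D_w(v)$ alone, and in particular must not deteriorate for vertices of small actual degree, so that Theorem~\ref{thm:main4} follows directly with no auxiliary minimum-degree reduction.
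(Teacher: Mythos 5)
Your proposal is correct and follows the same route as the paper: reduce to connected graphs, take the (strictly positive) Perron eigenvector of the adjacency matrix as the weight function in Theorem~\ref{thm:aux}, observe that every weighted degree then equals $\rho(G)$ exactly, and read off the fractional chromatic number bound from the resulting distribution. The only cosmetic difference is that the paper proves the clean inequality $\chi_f(G)\le 1/f(\rho(G))$ uniformly (with $f$ the explicit Shearer function, which is monotone, so the component reduction is immediate), whereas you phrase things asymptotically and handle bounded-$\rho$ components via $\rho\ge\sqrt{\Delta}$; both are fine.
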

Theorem~\ref{thm:main4} lines up nicely with a rich area of research that is concerned with spectral bounds on the (fractional) chromatic number, see e.g. Chapter~6 of the textbook on spectral graph theory~\cite{chung} by Chung and~\cite{bilu,cvetkovic,guo,hoffman,yuval,mohar,nikiforov} for some small selection of articles on the topic. On the one hand, Theorem~\ref{thm:main4} relates to Wilf's classic spectral bound~\cite{wilf} on the chromatic number, which states that every connected graph $G$ satisfies $\chi(G)\le \rho(G)+1$ with equality if and only if $G$ is an odd cycle or a complete graph. Theorem~\ref{thm:main4} shows that at least for the fractional chromatic number, a $\ln \rho(G)$-factor can be saved compared to Wilf's bound when the graph is assumed to be triangle-free. On the other hand, it is easy to see that $\rho(G)\le \Delta(G)$ for every graph $G$ and in fact, Hayes~\cite{hayes} proved that moreover, every $d$-degenerate graph $G$ satisfies $\rho(G)\le 2\sqrt{d(\Delta(G)-d)}$. This implies that $\rho(G)$ is asymptotically smaller than the maximum degree $\Delta(G)$ for many graphs, which means that Theorem~\ref{thm:main4} may be regarded as a substantial strengthening of the upper bound $\chi_f(G)\le (1+o(1))\frac{\Delta(G)}{\ln \Delta(G)}$ implied by Molloy's result~\cite{molloy}. 

Theorem~\ref{thm:main4} also relates to a conjecture of Harris~\cite{harris}, stating that every triangle-free $d$-degenerate graph $G$ satisfies $\chi_f(G)\le O(\frac{d}{\ln d})$. A proof of this conjecture would have significant ramifications for other problems in graph theory, such as the resolution of a conjecture by Esperet, Kang and Thomassé~\cite{Esperet} on bipartite induced subgraphs, (see~\cite{cames,benny} for a discussion). Since the spectral radius $\rho(G)$ is sandwiched between the degeneracy and the maximum degree of the graph, Theorem~\ref{thm:main4} can be seen as a first step of moving from Molloy's $(1+o(1))\frac{\Delta(G)}{\ln \Delta(G)}$ upper bound to the smaller bound conjectured by Harris. It would be interesting to strengthen Theorem~\ref{thm:main4} to a bound for the chromatic number, as follows.
\begin{conjecture}
    Every triangle-free graph $G$ satisfies
    $\chi(G)\le(1+o(1))\frac{\rho(G)}{\ln \rho(G)}$.
\end{conjecture}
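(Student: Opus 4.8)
Since the statement is posed as a conjecture, what follows is a strategy I would pursue rather than a complete proof.

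The plan is to run an eigenvector-weighted, local version of Molloy's semi-random (``nibble'') colouring procedure. Fix a triangle-free graph $G$ with $\rho := \rho(G)$ large, and let $\psi\colon V(G)\to(0,1]$ be the Perron--Frobenius eigenvector of the adjacency matrix, normalized so that $\max_v\psi(v)=1$; thus $\sum_{u\sim v}\psi(u)=\rho\,\psi(v)$ for every $v$. The structural content of this identity is that any vertex $v$ with $d_G(v)\gg\rho$ must carry almost all of its incident $\psi$-mass on neighbours $u$ with $\psi(u)\ll\psi(v)$, so that ``locally dense'' regions are forced to border ``locally sparse'' ones. The aim is to design a colouring procedure in which a high-degree vertex is coloured using colours left over by its low-$\psi$ neighbourhood, so that the effective number of colours is governed by $\rho$ rather than by $\Delta(G)$. (Note also that the degeneracy of $G$ is at most $\rho(G)$, since a subgraph witnessing degeneracy $d$ has average degree at least $d$ and hence spectral radius at least $d$; thus the conjecture would follow from a chromatic-number strengthening of Harris's conjecture, but since the latter is open a direct argument seems unavoidable.)

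Concretely, I would proceed as follows. (i) Set the palette size to $q=(1+\eps)\rho/\ln\rho$; any vertex of degree below $q-1$ may be deleted and coloured greedily at the very end, so after a standard iterated-deletion step one may assume $\delta(G)\ge q-1$. (ii) Iterate the usual round: every uncoloured vertex independently picks a uniformly random colour from its current list and keeps it if no neighbour picked the same colour; the $\ln$-factor gain from triangle-freeness enters exactly as in Molloy's analysis, since the neighbourhood of every vertex is an independent set. (iii) The new ingredient is to analyse the $\psi$-weighted potential $\Phi_t=\sum_{v\text{ uncoloured}}\psi(v)\,f\!\big(\ell_t(v),T_t(v)\big)$, where $\ell_t(v)$ is the current list size and $T_t(v)$ the current uncoloured degree of $v$, and to use the (in)equality $\sum_{u\sim_{G}v}\psi(u)=\rho\,\psi(v)$ together with Talagrand-type concentration to show that $\Phi_t$ decreases in expectation at a rate that sees only $\rho$: a high-degree vertex contributes little to $\Phi_t$ because $\psi(v)$ is small, and the colours it ``consumes'' are charged against the large $\psi$-mass carried by its sparse neighbourhood. (iv) Once all surviving list sizes are large relative to the current maximum degree, finish with a standard endgame.

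I expect step (iii) to be the crux, and the main obstacle overall. The fractional chromatic number interacts linearly with vertex weights --- a probability distribution on independent sets can simply be averaged --- which is exactly why the weighted Theorem~\ref{thm:aux} yields Theorem~\ref{thm:main4} so cleanly; a proper colouring admits no such averaging, and ordinary LP-style rounding of the fractional colouring from Theorem~\ref{thm:main4} loses a $\ln\rho$ factor, so a genuinely new colouring argument is needed. Moreover, Molloy's method relies on every vertex retaining a list of size $\gtrsim\Delta/\ln\Delta$ throughout; replacing $\Delta$ by $\rho$ forces the procedure to couple the colouring rates of vertices whose degrees may differ by polynomial factors, and controlling the correlations between the list of a vertex and those in its neighbourhood in that regime appears to require a substantially more delicate concentration analysis than the (near-)regular case handled by Molloy. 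A secondary obstacle is that even the list-colouring refinements of Molloy's theorem are stated in terms of the maximum degree, whereas a workable spectral analogue would need list sizes that vary with $\psi(v)$, complicating the usual ``reservoir'' and ``wastefulness'' bookkeeping. Given all this, a sensible first target is the qualitative bound $\chi(G)\le O\!\big(\rho(G)/\ln\rho(G)\big)$, the spectral analogue of Johansson's theorem, with the sharp leading constant postponed to a second stage.
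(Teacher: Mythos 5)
The statement you are addressing is posed \emph{as an open conjecture} in the paper; the paper contains no proof of it, only the fractional analogue (Theorem~\ref{thm:main4}, via Theorem~\ref{thm:aux}). You correctly recognize this, and what you offer is an outline of a strategy rather than a proof, so there is no paper argument to compare against on equal footing. Your diagnosis of \emph{why} the fractional case is tractable --- probability distributions on independent sets can be averaged, which is exactly how the eigenvector weight enters Theorem~\ref{thm:aux} --- and why no such averaging is available for proper colouring is accurate and worth stating.

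On the substance of your outline, let me flag the gap in step (iii) more concretely than you do. A global potential $\Phi_t=\sum_v\psi(v)f(\ell_t(v),T_t(v))$ decreasing in expectation is a statement about a $\psi$-weighted aggregate; it does not by itself protect an individual vertex from being stranded. In Molloy's analysis, with palette size $q=(1+\eps)\Delta/\ln\Delta$, the expected list size of a vertex after one round is of order $q\exp(-d(v)/q)$, which is comparable to $q\cdot\Delta^{-1/(1+\eps)}$ when $d(v)\le\Delta$; this is positive precisely because the degree never exceeds the parameter $\Delta$ that set the palette size. If you instead take $q=(1+\eps)\rho/\ln\rho$, a vertex with $d(v)/\rho\to\infty$ (which can occur: only $\Delta\le\rho^2$ is guaranteed) has $q\exp(-d(v)/q)\to 0$, i.e.\ its list is empty with high probability after a single round. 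The eigenvector identity $\sum_{u\sim v}\psi(u)=\rho\,\psi(v)$ tells you such a vertex carries little $\psi$-mass and borders low-$\psi$ vertices, but it says nothing about the \emph{colours} those neighbours choose; small weight does not excuse the vertex from being coloured. So the potential argument as stated does not address the real difficulty, which is a \emph{local} one: you would need to show that the colours appearing on $N(v)$ collide so heavily (some entropy/counting argument exploiting triangle-freeness in the spirit of Shearer and Molloy, but with $\rho$ replacing $\Delta$) that a list of size only $\rho/\ln\rho$ survives even when $d(v)\gg\rho$. That is a genuinely new ingredient, not a reweighting of the existing one. Your own caveats point in this direction, but the failure mode is sharper than ``more delicate concentration'': without the local collision argument, the procedure simply breaks after one step. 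Your fallback target --- the qualitative $\chi(G)=O(\rho/\ln\rho)$, a spectral Johansson --- is the right intermediate goal, and it too will require resolving this local issue.
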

\paragraph*{\textbf{Organization.}} The rest of the paper is structured as follows: In the Section~\ref{sec:proof} we establish our main technical result, Theorem~\ref{thm:aux}. In the following  Section~\ref{sec:deduc} we then derive our main results from Theorem~\ref{thm:aux}.
\section{Key technical result}\label{sec:proof}
In this section, we present the proof of our key technical result, Theorem~\ref{thm:aux} below, which generalizes Theorem~\ref{thm:main2} to a vertex-weighted setting. 

In the following, we denote by\footnote{We remark that the function $f$ is the same function that was used by Shearer in his first paper~\cite{shearer83}.} $f:[0,\infty)\rightarrow \mathbb{R}_+$ the unique continuous extension of $x\rightarrow \frac{(1-x)+x\ln(x)}{(x-1)^2}$ from $[0,\infty)\setminus \{0,1\}$ to $[0,\infty)$. It is not hard to check that $f$ exists and has the following properties:
\begin{itemize}
    \item $f(0)=1, f(1)=\frac{1}{2}$,
    \item $f$ is convex,
    \item $f$ is strictly monotonically decreasing,
    \item $f$ is continuously differentiable on $(0,\infty)$ and satisfies the following differential equation:
    $$x(x-1)f'(x)+(x+1)f(x)=1$$ for every $x>0$, 
    \item $|xf'(x)|<1$ for every $x>0$, and
    \item $f(x)=(1-o(1))\frac{\ln(x)}{x}$ as $x\rightarrow \infty$.
\end{itemize}
In the following, given a weight function $w:V(G)\rightarrow\mathbb{R}_+$ on the vertices of a graph $G$ and a subset $X\subseteq V(G)$, $w(X):=\sum_{v\in X}{w(v)}$ denotes the total weight of $X$.

In this section our goal shall be to establish the following statement, which represents the main technical contribution of the paper at hand and from which all our main results (Theorems~\ref{thm:main2},~\ref{thm:main3},~\ref{thm:edgebound},~\ref{thm:main4}) can be deduced without much work (this will be done in the following section). We believe that the more general result offered by Theorem~\ref{thm:aux} may be of independent interest.

\begin{theorem}\label{thm:aux}
For every triangle-free graph $G$ and every strictly positive weight function $w:V(G)\rightarrow \mathbb{R}_+$ on the vertices there exists a probability distribution $\mathcal{D}$ on the independent sets of $G$ such that $$\mathbb{P}_{I\sim \mathcal{D}}[v\in I]\ge f\left(\frac{w(N_G(v))}{w(v)}\right)$$ for every vertex $v\in V(G)$.
\end{theorem}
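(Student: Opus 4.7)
I would proceed by induction on $|V(G)|$; the empty-graph case is trivial. For the inductive step, pick a vertex $v$ and attempt to construct $\mathcal{D}$ as a convex combination of two distributions produced by the induction hypothesis: with probability $p$, take the inductive distribution on $G - N_G[v]$ (with the inherited weights) and adjoin $v$; with the remaining probability, take the inductive distribution on $G - v$. Since $G$ is triangle-free, $N_G(v)$ is independent, so the first branch still yields an independent set of $G$. For a vertex $u$ non-adjacent to $v$, the marginal bound is preserved automatically from monotonicity of $f$, since its weighted degree only decreases when we pass to the subgraph. The delicate constraints are $p \geq f(w(N(v))/w(v))$ (to cover $v$) and $(1-p)\,f\bigl(x_u - w(v)/w(u)\bigr) \geq f(x_u)$ for every neighbor $u$ of $v$, where I write $x_u = w(N_G(u))/w(u)$.

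A direct calculation shows that these inequalities are exactly what the differential equation $x(x-1)f'(x) + (x+1)f(x) = 1$ enforces infinitesimally, but they need not simultaneously hold for a discrete removal of all the mass $w(v)$ in one shot. This steers the argument toward a continuous deformation: rather than deleting $v$ in a single step, continuously drive $w(v)$ down to zero, evolving the distribution $\mathcal{D}_t$ along an ODE whose right-hand side is dictated by the requirement that each marginal $\mathbb{P}_{I \sim \mathcal{D}_t}[u \in I]$ trace out $f(x_u(t))$ precisely. The identity for $f$ then expresses exactly the compatibility of the rate changes at $v$ and at each of its neighbors. The triangle-free hypothesis is what guarantees that these rate changes do not create secondary couplings among the neighbors themselves, since $N_G(v)$ is independent; interactions at distance two can be absorbed into the inductive bound because the weighted degrees of such vertices only shrink.

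I expect the main technical obstacle to be realizing the prescribed marginal evolution as a genuine probability distribution on independent sets at every time $t$, not merely as a consistent collection of one-vertex marginals. This would likely be achieved by expressing $\mathcal{D}_t$ as a convex combination of a small collection of ``base'' distributions (for instance, the include/exclude branches above, together with the inductive distributions on $G - v$ and $G - N_G[v]$) and deriving a small linear ODE for the mixing weights whose solvability follows from the identity for $f$ and the bound $|xf'(x)|<1$. Secondary concerns I would need to address are: showing that the mixing weights stay in $[0,1]$ throughout the deformation (using convexity and monotonicity of $f$), and handling the boundary behavior as $w(v) \to 0$, at which point the weighted degree $x_v$ diverges, $f(x_v) \to 0$, and the problem cleanly reduces to the inductive hypothesis applied to $G - v$.
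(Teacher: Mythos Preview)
Your inductive framework and your recognition that the identity $x(x-1)f'(x)+(x+1)f(x)=1$ must drive the argument are both correct, and the paper also proceeds by induction on $|V(G)|$. But the mechanism you propose is different from the paper's, and the obstacle you yourself flag---realizing the marginal ODE as a genuine evolution of distributions---is left unresolved in your sketch.

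The paper does not fix a single vertex and deform its weight to zero. Instead it perturbs the whole problem at once: with small probability $\varepsilon$, sample a vertex $u$ \emph{at random} with $\mathbb{P}[u=x]=w(x)$ and return $\{u\}$ together with the inductive independent set on $G-\overline{N}_G(u)$; with probability $1-\varepsilon$, use a distribution on $G$ itself supplied by a bootstrap hypothesis. That hypothesis is obtained by a compactness argument: the set of slacks $\delta\in[0,1]$ for which the theorem holds for all $w$ with $f(\cdot)-\delta$ in place of $f(\cdot)$ is shown to be closed, its minimum $\delta_0$ is assumed positive, and the mixture above is shown to achieve slack strictly below $\delta_0$, a contradiction. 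A further device absent from your outline is that the $\delta_0$-slack bound is applied not with $w$ but with the tilted weights $w'(v)=w(v)\exp\bigl(\varepsilon\,w(N_G(v))\bigr)$; this tilt is exactly what makes the first-order terms in $\varepsilon$ collapse, after one application of convexity of $f$, to the differential identity for $f$.

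In your scheme, by contrast, fixing a single $v$ forces the simultaneous constraints $(1-p)\,f\bigl(x_u-w(v)/w(u)\bigr)\ge f(x_u)$ for every $u\in N_G(v)$ together with $p\ge f(x_v)$, and these can genuinely fail. The continuous-in-$t$ version does not escape the difficulty: at every instant the same family of per-neighbor inequalities must hold with $w(v)$ replaced by its current value. Your proposed fix---a linear ODE for mixing weights over a few base distributions---is not yet a proof, because the inductive base distributions come only with \emph{lower} bounds on their marginals, not equalities, so you cannot steer them to trace $f(x_u(t))$ precisely; and you have no argument that the mixing weights remain in $[0,1]$. The paper's randomization over the ``pivot'' vertex $u$ is precisely what replaces this family of per-neighbor constraints by a single averaged inequality, to which convexity of $f$ and then the differential equation apply directly.
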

\begin{proof}
We prove the statement by induction on $|V(G)|$. In the base case $|V(G)|=1$, there is a unique vertex $v$ of $G$, so $w(N_G(v))=w(\emptyset)=0$ and hence our target probability of the appearance of $v$ in a randomly drawn independent set is $f(0)=1$. This is easily achieved by letting $\mathcal{D}$ be the probability distribution that always picks $\{v\}$, establishing the induction base.

For the induction step, let us assume that $G$ is a triangle-free graph on at least two vertices and that we have already proven the claim of the theorem for all triangle-free graphs with strictly less vertices than $G$. 

Let $K \subseteq [0,1]$ be the set of all $\delta\in [0,1]$ such that for every strictly positive weight function $w:V(G)\rightarrow \mathbb{R}_+$ there exists a probability distribution $\mathcal{D}$ on the independent sets of $G$ such that $\mathbb{P}_{I\sim D}[v\in I]\ge f\left(\frac{w(N_G(v))}{w(v)}\right)-\delta$ for every $v\in V(G)$. Since $f$ takes values in $[0,1]$, we trivially have $1\in K$. Furthermore, we claim that the set $K$ is closed (and thus compact). 
To see this, note that $K=\bigcap_{w:V(G)\rightarrow \mathbb{R}_+} K_w$, where $K_w$ is the set of all $\delta\in [0,1]$ for which there exists a probability distribution $\mathcal{D}$ on the independent sets of $G$ satisfying $\mathbb{P}_{I\sim \mathcal{D}}[v\in I]\ge f\left(\frac{w(N_G(v))}{w(v)}\right)-\delta$ for every $v\in V(G)$. Since intersections of closed sets are closed, it suffices to show that $K_w$ is closed for every fixed $w:V(G)\rightarrow \mathbb{R}_+$. Now, consider the following linear program ($\mathcal{I}(G)$ denotes the collection of all independent sets in $G$):

\begin{align*}
    \text{min}~~ &y\\
    \text{s.t.}~~ y+\sum_{I\in \mathcal{I}(G): v \in I}{x_I}&\ge f\left(\frac{w(N_G(v))}{w(v)}\right)~~(\forall v \in V(G)),
    \\
    \sum_{I\in \mathcal{I}(G)} x_I&=1
    \\
     x_I &\ge 0~~(\forall I \in \mathcal{I}(G)).
\end{align*}
It can easily be checked that this linear program is bounded and feasible, and hence has a unique optimum $y^\ast$. Further, since the constraints of the program encode a probability distribution $\mathcal{D}$ on independent sets with $\mathbb{P}_{I\sim \mathcal{D}}[v\in I]\ge f\left(\frac{w(N_G(v))}{w(v)}\right)-y$, we can see that $K_w=[y^\ast,1]$ is indeed a closed set as desired.

This shows that $K$ is indeed compact and hence has a unique minimum $\delta_0 \in K$. Our goal is to show that $0\in K$ (equivalently, $\delta_0=0$), since this clearly establishes the induction hypothesis for $G$. So, towards a contradiction, let us assume $\delta_0>0$ in the following.

Let us define $\delta:=\delta_0-\frac{\delta_0^2}{8}$. Then, since $\delta\in (0,\delta_0)$ and hence $\delta \notin K$, there exists a positive weight function $w:V(G)\rightarrow \mathbb{R}_+$ such that there exists no probability distribution on the independent sets of $G$ for which every vertex $v$ is contained in an independent set drawn from the distribution with probability at least $f\left(\frac{w(N_G(v))}{w(v)}\right)-\delta$. Since the latter formula is scale-invariant, we may assume w.l.o.g throughout the rest of the proof that $w(V(G))=1$.

Let us pick and fix some $\varepsilon \in (0,1)$ (for now arbitrarily, later on we will assign a concrete value). Let $w':V(G)\rightarrow \mathbb{R}^+$ be a modified vertex-weighting of $G$, defined as $w'(v):=w(v)\cdot \exp\left(\varepsilon w(N_G(v))\right)$ for every $v\in V(G)$. 

Since $\delta_0\in K$, there must exist a probability distribution $\mathcal{D}$ on the independent sets of $G$ such that
$$\mathbb{P}_{I\sim \mathcal{D}}[v\in I]\ge f\left(\frac{w'(N_G(v))}{w'(v)}\right)-\delta_0$$ for every $v\in V(G)$.

For a vertex $u\in V(G)$, let us denote by $\overline{N}_G(u):=\{u\}\cup N_G(u)$ the \emph{closed neighborhood} of $u$ and by $G_u:=G-\overline{N}_G(u)$ the graph obtained from $G$ by deleting this closed neighborhood. By the inductive assumption, for every $u\in V(G)$ there exists a probability distribution $\mathcal{D}_u$ on $G_u$ such that $\mathbb{P}_{I\sim \mathcal{D}_u}[v \in I]\ge f\left(\frac{w'(N_{G_u}(v))}{w'(v)}\right)$ for every $v\in V(G_u)$. 

Let us now define $\varepsilon:=\frac{\delta_0}{4} \in (0,1)$, and let us consider the following process to generate a random independent set $I$ of $G$:
\begin{itemize}
    \item With probability $1-\varepsilon$ (we call this event $A$), draw $I$ randomly from the distribution $\mathcal{D}$ and return $I$.
    \item With probability $\varepsilon$ (we call this event $B:=A^\mathsf{c}$), do the following: First, sample randomly a vertex $u\in V(G)$ where $u$ equals any given vertex $x$ with probability exactly $w(x)$. Then, randomly draw an independent set $I_u$ from the distribution $\mathcal{D}_u$ and return the independent set $I:=\{u\}\cup I_u$. 
\end{itemize}
In the following, let $\mathcal{D}'$ denote the probability distribution on independent sets of $G$ that is induced by the random independent set $I$ created according to the above process. By our choice of the weight function $w$, there must exist some vertex $v\in V(G)$ such that $$\mathbb{P}_{I\sim \mathcal{D}'}[v\in I]<f\left(\frac{w(N_G(v))}{w(v)}\right)-\delta.$$ Our intermediate goal is to give a lower bound on $\mathbb{P}_{I\sim \mathcal{D}'}[v\in I]$. 

To estimate this probability, we stick with the random process described above. We then have
\begin{align*}
\mathbb{P}_{I\sim \mathcal{D}'}[v\in I]
&=(1-\varepsilon)\mathbb{P}_{I\sim \mathcal{D}'}[v\in I|A]+\varepsilon \mathbb{P}_{I\sim \mathcal{D}'}[v\in I|B]\\
&=(1-\varepsilon)\mathbb{P}_{I\sim \mathcal{D}}[v\in I]+\varepsilon\sum_{x\in V(G)}\mathbb{P}_{I\sim \mathcal{D}'}[v\in I|B \wedge \{u=x\}]w(x)\\
&=(1-\varepsilon)\mathbb{P}_{I\sim \mathcal{D}}[v\in I]+\varepsilon w(v)+\varepsilon\sum_{x\in V(G)\setminus \overline{N}_G(v)}\mathbb{P}_{I\sim \mathcal{D}_x}[v\in I]w(x).\end{align*}
By our choice of the distributions $\mathcal{D}$ and $\mathcal{D}_x$, $x\in V(G)$, we have
\begin{align*}
&(1-\varepsilon)\mathbb{P}_{I\sim \mathcal{D}}[v\in I]+\varepsilon\sum_{x\in V(G)\setminus \overline{N}_G(v)}\mathbb{P}_{I\sim \mathcal{D}_x}[v\in I]w(x)\\ &\ge (1-\varepsilon)\left(f\left(\frac{w'(N_G(v))}{w'(v)}\right)-\delta_0\right)+\varepsilon \sum_{x\in V(G)\setminus \overline{N}_G(v)}f\left(\frac{w'(N_{G_x}(v))}{w'(v)}\right)w(x)\\ &=-(1-\varepsilon)\delta_0+(1-\varepsilon)f\left(\frac{w'(N_G(v))}{w'(v)}\right)+\sum_{x\in V(G)\setminus \overline{N}_G(v)}\varepsilon w(x)f\left(\frac{w'(N_{G_x}(v))}{w'(v)}\right).\end{align*}
Since $(1-\varepsilon)+\sum_{x\in V(G)\setminus \overline{N}_G(v)}\varepsilon w(x)=(1-\varepsilon)+\varepsilon\left(1-w(\overline{N}_G(v))\right)=1-\varepsilon w(\overline{N}_G(v))$, the convexity of $f$ implies that
$$(1-\varepsilon)f\left(\frac{w'(N_G(v))}{w'(v)}\right)+\sum_{x\in V(G)\setminus \overline{N}_G(v)}\varepsilon w(x)f\left(\frac{w'(N_{G_x}(v))}{w'(v)}\right)\ge $$
$$\left(1-\varepsilon w(\overline{N}_G(v))\right)f\left(\frac{(1-\varepsilon)w'(N_G(v))+\sum_{x\in V(G)\setminus \overline{N}_G(v)}\varepsilon w(x)w'(N_{G_x}(v))}{w'(v)\left(1-\varepsilon w(\overline{N}_G(v))\right)}\right).$$

The next claim gives a simple upper bound for the expression in the argument of $f$ above.

\medskip

\paragraph*{\textbf{Claim.}} We have that $$\frac{(1-\varepsilon)w'(N_G(v))+\sum_{x\in V(G)\setminus \overline{N}_G(v)}\varepsilon w(x)w'(N_{G_x}(v))}{w'(v)\left(1-\varepsilon w(\overline{N}_G(v))\right)} \le \frac{w(N_G(v))}{w(v)}\cdot e^{\varepsilon (w(v)-w(N_G(v)))}.$$
\begin{proof}[Proof of the Claim.]
We have
\begin{align*}
    &(1-\varepsilon) w'(N_G(v))+\sum_{x\in V(G)\setminus \overline{N}_G(v)}\varepsilon w(x)w'(N_{G_x}(v))\\
    &=\sum_{y\in N_G(v)}{(1-\varepsilon)w'(y)}+\sum_{x\in V(G)\setminus \overline{N}_G(v)}\varepsilon w(x)\sum_{y\in N_G(v)\setminus \overline{N}_G(x)}{w'(y)}\\
    &=\sum_{y\in N_G(v)}\left((1-\varepsilon)+\sum_{x\in V(G)\setminus (\overline{N}_G(v)\cup \overline{N}_G(y))}\varepsilon w(x)\right)w'(y)\\
    &=\sum_{y\in N_G(v)}\left(1-\varepsilon+\varepsilon(1-w(\overline{N}_G(v)\cup \overline{N}_G(y)))\right)w'(y)\\
    &=\sum_{y\in N_G(v)}\left(1-\varepsilon w(\overline{N}_G(v)\cup \overline{N}_G(y))\right)w'(y).
\end{align*}
Note that for every $y\in N_G(v)$, we have $\overline{N}_G(v)\cup \overline{N}_G(y)=N_G(v)\cup N_G(y)$. Furthermore, since $G$ is triangle-free, the sets $N_G(v)$ and $N_G(y)$ are disjoint, and thus we have $w(\overline{N}_G(v)\cup \overline{N}_G(y))=w(N_G(v))+w(N_G(y))$. This implies
\begin{align*}
&\frac{(1-\varepsilon)w'(N_G(v))+\sum_{x\in V(G)\setminus \overline{N}_G(v)}\varepsilon w(x) w'(N_{G_x}(v))}{w'(v)\left(1-\varepsilon w(\overline{N}_G(v))\right)}\\
&=\frac{
1}{w'(v)}\sum_{y\in N_G(v)}{\frac{1-\varepsilon w(N_G(v))-\varepsilon w(N_G(y))}{1-\varepsilon w(\overline{N}_G(v))}w'(y)}\\
&=\frac{
1}{w'(v)}\sum_{y\in N_G(v)}{\left(1-\varepsilon \frac{w(N_G(y))-w(v)}{1-\varepsilon w(\overline{N}_G(v))}\right)w'(y)}\\
&\le \frac{
1}{w'(v)}\sum_{y\in N_G(v)}{\left(1-\varepsilon (w(N_G(y))-w(v))\right)w'(y)}\\
&\le \frac{1}{w'(v)}\sum_{y\in N_G(v)}\exp\left(-\varepsilon (w(N_G(y))-w(v))\right)\cdot w(y)\exp\left(\varepsilon w(N_G(y))\right)\\
&=\frac{1}{w'(v)}\exp\left(\varepsilon w(v)\right)w(N_G(v))\\
&=\frac{w(N_G(v))}{w(v)}\cdot \exp\left(\varepsilon(w(v)-w(N_G(v)))\right),
\end{align*}
where we used the definition of $w'$ in the last and third to last line. This concludes the proof of the claim.
\end{proof}
Using the claim and the previously established inequalities (using that $f$ is monotonically decreasing), it follows that $\mathbb{P}_{I\sim \mathcal{D}'}[v\in I]$ is lower-bounded by
$$\varepsilon w(v)-(1-\varepsilon)\delta_0+\left(1-\varepsilon w(\overline{N}_G(v))\right)f\left(\frac{w(N_G(v))}{w(v)}\cdot \exp\left(\varepsilon (w(v)-w(N_G(v)))\right)\right).$$

Let us now go about estimating the above expression. By Taylor expansion, it is not hard to verify that the inequality $\exp(z)\le 1+z+2z^2$ holds for every $z\in [-1,1]$. Let us set $x:=\frac{w(N_G(v))}{w(v)}$, $z:=\varepsilon(w(v)-w(N_G(v)))$ and $y:=x\cdot \exp(z)$. Note that since $f$ is convex and differentiable, we have the inequality $f(y)\ge f(x)+f'(x)(y-x)$. Since $w(v), w(N_G(v))\le w(V(G))=1$, we obtain $|z|\le \varepsilon<1$ and thus $y\le x(1+z+2z^2)$. Since $f$ is monotonically decreasing, we have $f'(x)<0$. Putting these facts together, it follows that
\begin{align*}&f\left(\frac{w(N_G(v))}{w(v)}\cdot \exp\left(\varepsilon (w(v)-w(N_G(v)))\right)\right)=f(y)\\
&\ge f(x)+f'(x)(y-x)\\
&\ge f(x)+f'(x)\cdot x\cdot (z+2z^2)\\
& \ge f(x)+xzf'(x)-2\varepsilon^2,\end{align*}
where we used that $|x\cdot f'(x)|\le 1$ for every $x>0$ and that $|z|\le \varepsilon$ in the last line. Plugging this estimate into the above lower bound for $\mathbb{P}_{I\sim \mathcal{D}'}[v\in I]$ and using that by our choice of $v$, we have $\mathbb{P}_{I\sim\mathcal{D}'}[v\in I]<f\left(\frac{w(N_G(v))}{w(v)}\right)-\delta$, we find:

\begin{align*}
    &f(x)-\delta>\mathbb{P}_{I\sim \mathcal{D}'}[v\in I]\\
    &\ge -(1-\varepsilon)\delta_0+\varepsilon w(v)+(1-\varepsilon (w(v)+w(N_G(v))))\cdot (f(x)+\underbrace{xzf'(x)-2\varepsilon^2}_{<0})\\
    &>-(1-\varepsilon)\delta_0+\varepsilon w(v)+f(x)+xzf'(x)-2\varepsilon^2-\varepsilon(w(v)+w(N_G(v)))f(x).
\end{align*}
Rearranging yields 
$$\delta_0-\delta>\varepsilon\delta_0-2\varepsilon^2+\varepsilon w(v)+xzf'(x)-\varepsilon (w(v)+w(N_G(v)))f(x).$$ Using that $x=\frac{w(N_G(v))}{w(v)}$ and $z=\varepsilon w(v) (1-x)$, we can simplify as follows.

\begin{align*}
    &\varepsilon w(v)+xzf'(x)-\varepsilon(w(v)+w(N_G(v)))f(x)\\
    &=\varepsilon w(v)\left(1+x(1-x)f'(x)-(1+x)f(x)\right)=0,
\end{align*}
where we used the differential equation satisfied by $f$ in the last step. Hence, we have proven the inequality $\delta_0-\delta>\varepsilon \delta_0-2\varepsilon^2$. Recalling our definitions $\delta:=\delta_0-\frac{\delta_0^2}{8}$ and $\varepsilon:=\frac{\delta_0}{4}$ we can now see that the above inequality implies $\frac{\delta_0^2}{8}>\frac{\delta_0^2}{8}$, which is absurd. This is the desired contradiction, which shows that our initial assumption, namely that $\delta_0>0$, was wrong. Hence, we have shown that $\delta_0=0$, establishing the inductive claim for $G$. This concludes the proof of the theorem by induction.
\end{proof}

\section{Proofs of Theorems~\ref{thm:main2},~\ref{thm:main3},~\ref{thm:edgebound},~\ref{thm:main4}}\label{sec:deduc}

In this section we use Theorem~\ref{thm:aux} established in the previous sections to deduce our three main results. 

Let us start with Theorem~\ref{thm:main2}, which is a simple corollary of Theorem~\ref{thm:aux} by simply using the all-$1$ weight assignment.

\begin{proof}[Proof of Theorem~\ref{thm:main2}]
Let $G$ be any given triangle-free graph, and let $w:V(G)\rightarrow \mathbb{R}^+$ be defined as $w(v):=1$ for every $v\in V(G)$. Then $\frac{w(N_G(v))}{w(v)}=d_G(v)$ for every vertex $v\in V(G)$, and hence by Theorem~\ref{thm:aux} there exists a probability distribution $\mathcal{D}$ on independent sets of $G$ such that 
$$\mathbb{P}_{I\sim \mathcal{D}}[v\in I]\ge f(d_G(v))$$ for every $v\in V(G)$. Since $f(x)=(1-o(1))\frac{\ln(x)}{x}$, this establishes Theorem~\ref{thm:main2}.
\end{proof}

Next, let us deduce Theorem~\ref{thm:main3}. This, in fact, can be derived from Theorem~\ref{thm:main2} using the following relationship between Conjectures~\ref{con:localshearer} and~\ref{con:fractionalcoloring} proved by Kelly and Postle~\cite[Proposition~5.2]{KP}:

\begin{proposition}\label{prop:KP}
For every $\varepsilon, c>0$, the following holds for sufficiently large $n$. Let $G$ be a triangle-free graph on $n$ vertices with demand function $h$ such that $h(v)\ge c\frac{\ln d_G(v)}{d_G(v)}$ for every $v\in V(G)$. If $G$ has an $h$-coloring, then $$\chi_f(G)\le (\sqrt{2/c}+\varepsilon)\sqrt{\frac{n}{\ln n}}.$$
\end{proposition}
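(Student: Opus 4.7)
The strategy is to construct a probability distribution on the independent sets of $G$ that covers every vertex with probability at least $(1-o(1))\sqrt{c\ln n/(2n)}$; by the LP characterization of $\chi_f$ recalled in the introduction, this immediately yields $\chi_f(G)\le(\sqrt{2/c}+\varepsilon)\sqrt{n/\ln n}$ once $n$ is large enough. The key idea is to mix the distribution induced by the given $h$-coloring with a ``random neighborhood'' distribution; these two distributions favor complementary degree ranges and balance optimally at a carefully chosen weight.

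Explicitly, I would introduce two probability distributions on independent sets of $G$. The first, $\sigma$, samples $t\in[0,1]$ uniformly and outputs $I_t:=\{v\in V(G):t\in c(v)\}$, which is independent because $c$ is an $h$-coloring; then $\mathbb{P}_\sigma[v\in I]=|c(v)|\ge h(v)\ge c\ln d(v)/d(v)$. The second, $\tau$, samples a uniformly random vertex $u\in V(G)$ and outputs $N_G(u)$; this set is independent \emph{precisely because} $G$ is triangle-free, and $\mathbb{P}_\tau[v\in I]=d(v)/n$. Forming the equal mixture $\mu:=\tfrac12\sigma+\tfrac12\tau$ yields
$$\mathbb{P}_\mu[v\in I]\ge g(d(v)),\qquad\text{where }g(d):=\frac{c}{2}\cdot\frac{\ln d}{d}+\frac{d}{2n}.$$

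The crux is then a one-variable optimisation: I would show that $g$ attains its minimum on $(e,\infty)$ at the unique $d^\ast$ with $(d^\ast)^2=cn(\ln d^\ast-1)$, giving $d^\ast=(1+o(1))\sqrt{(c/2)n\ln n}$, and that at this point the two summands of $g$ balance and yield $g(d^\ast)=(1+o(1))\sqrt{c\ln n/(2n)}$. In particular, the weight $1/2$ in the mixing is the AM--GM optimal choice that produces the sharp constant $\sqrt{2/c}$, rather than the $2\sqrt{2/c}$ that a naive worst-case analysis of each distribution in isolation would give.

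The only real subtlety is that $g(d)$ is not uniformly lower-bounded by $\sqrt{c\ln n/(2n)}$ on all of $\{0,1,\dots,n\}$: for very small $d$, the $h$-coloring term $\tfrac{c}{2}\tfrac{\ln d}{d}$ vanishes while the neighborhood term $\tfrac{d}{2n}$ is only $O(1/n)$. To circumvent this, I would set $S:=\{v\in V(G):d_G(v)<d_0\}$ for a sufficiently large constant $d_0=d_0(c,\varepsilon)$, apply the mixture $\mu$ to $G-S$ (on which $g(d(v))\ge(1-o(1))\sqrt{c\ln n/(2n)}$ holds for $n\ge n_0$), and control the remainder by the greedy bound $\chi_f(G[S])\le\Delta(G[S])+1\le d_0$; using $\chi_f(G)\le\chi_f(G-S)+\chi_f(G[S])$ (via disjoint color palettes), the additive constant $d_0$ is absorbed into the $\varepsilon$-slack when $n$ is sufficiently large. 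The main obstacle is purely computational: tracking the error terms carefully enough in the AM--GM balancing of $g$ to extract the sharp leading constant $\sqrt{2/c}$.
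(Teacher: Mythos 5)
Your proposal is sound, and it is worth noting that the paper itself does not prove Proposition~\ref{prop:KP}: it imports it verbatim from Kelly and Postle~\cite{KP} and only uses it as a black box to deduce Theorem~\ref{thm:main3}. Your direct argument --- mixing the distribution induced by the $h$-coloring with the neighborhood of a uniformly random vertex and balancing the two guarantees --- is exactly the technique the paper does spell out in its proof of Theorem~\ref{thm:edgebound}, where a three-way mixture (two invocations of Theorem~\ref{thm:aux} plus a degree-biased random neighborhood) is balanced by AM--GM. Your computation is correct: with $g(d)=\frac{c}{2}\frac{\ln d}{d}+\frac{d}{2n}$, the minimum over $d\ge 2$ is, for large $n$, attained at $d^\ast=(1+o(1))\sqrt{(c/2)\,n\ln n}$ where the two terms balance and give $g(d^\ast)=(1+o(1))\sqrt{c\ln n/(2n)}$, and the weight $1/2$ is indeed the optimal mixing ratio; inverting yields $(\sqrt{2/c}+\varepsilon)\sqrt{n/\ln n}$ for $n$ large, matching the claimed constant. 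So you have in effect reconstructed the Kelly--Postle argument, whereas the paper's ``proof'' is a citation; the gain of your route is a self-contained treatment in the same spirit as the paper's edge-bound proof.

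One step needs to be stated more carefully than you do: the handling of $S=\{v:d_G(v)<d_0\}$. When you ``apply the mixture $\mu$ to $G-S$'', you must keep $\mu$ exactly as defined on $G$ (sample $u$ uniformly from $V(G)$, take $N_G(u)$, and intersect the resulting independent set with $V(G-S)$), so that every $v\notin S$ is covered with probability at least $\frac{c}{2}\frac{\ln d_G(v)}{d_G(v)}+\frac{d_G(v)}{2n}=g(d_G(v))$ with \emph{both} terms involving the same degree $d_G(v)\ge d_0$. If instead one re-runs the construction inside $G-S$ (uniform $u\in V(G-S)$, neighborhoods taken in $G-S$), the balance breaks: the $h$-coloring still only guarantees $c\ln d_G(v)/d_G(v)$ in terms of the original degree, while the neighborhood term drops to $d_{G-S}(v)/|V(G-S)|$, and a vertex of large $G$-degree all of whose neighbors lie in $S$ (e.g.\ the center of a large star) would then be covered with probability far below $\sqrt{\ln n/n}$. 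Under the correct reading your removal argument works, and the subadditivity $\chi_f(G)\le\chi_f(G-S)+\chi_f(G[S])\le\chi_f(G-S)+d_0$ absorbs the constant into the $\varepsilon$-slack as you say. A minor inaccuracy: the $h$-coloring term does not ``vanish'' for all small $d$ --- for $2\le d\le d_0$ it is bounded below by a positive constant --- so only vertices of degree at most $1$ are genuinely problematic, though excising the larger set $S$ is of course harmless.
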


With this statement at hand, we can now easily deduce Theorem~\ref{thm:main3}. 

\begin{proof}[Proof of Theorem~\ref{thm:main3}]
The statement of Theorem~\ref{thm:main3} is equivalent to showing that for every fixed $\delta>0$ and $n$ sufficiently large in terms of $\delta$, every triangle-free graph $G$ on $n$ vertices satisfies $\chi_f(G)\le (\sqrt{2}+\delta)\sqrt{\frac{n}{\ln n}}$. Let $\varepsilon>0$ and $0<c<1$ (only depending on $\delta$) be chosen such that $\sqrt{2/c}+\varepsilon<\sqrt{2}+\delta$. By Proposition~\ref{prop:KP} there exists $n_0=n_0(\varepsilon,c)\in \mathbb{N}$ such that every triangle-free graph $G$ with $n\ge n_0$ vertices that admits an $h$-coloring for some demand function $h$ satisfying $h(v)\ge c\frac{\ln d_G(v)}{d_G(v)}$ for all $v\in V(G)$, has fractional chromatic number at most $(\sqrt{2/c}+\varepsilon)\sqrt{\frac{n}{\ln n}}\le (\sqrt{2}+\delta)\sqrt{\frac{n}{\ln n}}$. By~\cite[Proposition~1.4~(c)]{KP} the latter statement is equivalent to the following:
Every triangle-free graph on $n\ge n_0$ vertices that admits a probability distribution on its independent sets such that each vertex $v$ is included with probability at least $c\frac{\ln d_G(v)}{d_G(v)}$ in a randomly drawn independent set has fractional chromatic number at most $(\sqrt{2}+\delta)\sqrt{\frac{n}{\ln n}}$. 

Since $c<1$, Theorem~\ref{thm:main2} implies that there exists a constant $D=D(c)$ such that every triangle-free graph of minimum degree at least $D$ admits a probability distribution on its independent sets where each vertex $v$ is included in a randomly drawn independent set with probability at least $c\frac{\ln d_G(v)}{d_G(v)}$. Putting this together with the statement above, we immediately obtain that every triangle-free graph on $n\ge n_0$ vertices with minimum degree at least $D$ has fractional chromatic number at most $(\sqrt{2}+\delta)\sqrt{\frac{n}{\ln n}}$. 

Let $n_1$ be an integer chosen large enough such that $(\sqrt{2}+\delta)\sqrt{\frac{n_1}{\ln n_1}}>\max\{D+1,n_0\}$. We now claim that \emph{every} triangle-free graph on $n\ge n_1$ vertices has fractional chromatic number at most $(\sqrt{2}+\delta)\sqrt{\frac{n}{\ln n}}$, which is the statement that we wanted to prove initially. Let $G$ be any given triangle-free graph on $n\ge n_1$ vertices. Let $G'$ be the subgraph of $G$ obtained by repeatedly removing vertices of degree less than $D$ from $G$, until no such vertices are left ($G'$ is the so-called the \emph{$D$-core} of $G$). Then $G'$ is a triangle-free graph that is either empty or has minimum degree at least $D$. Hence, we either have $|V(G')|<n_0$ and thus $\chi_f(G')<n_0$, or $|V(G')|\ge n_0$ and thus $\chi_f(G')\le (\sqrt{2}+\delta)\sqrt{\frac{|V(G')|}{\ln |V(G')|}}\le (\sqrt{2}+\delta)\sqrt{\frac{n}{\ln n}}$.

Pause to verify that $\chi_f(G)\le \max\{\chi_f(G-v),d_G(v)+1\}$ holds for every graph $G$ and every vertex $v\in V(G)$. Repeated application of this fact combined with the definition of $G'$ now implies that $$\chi_f(G)\le \max\{\chi_f(G'),D+1\}\le \max\left\{n_0,(\sqrt{2}+\delta)\sqrt{\frac{n}{\ln n}},D+1\right\}=(\sqrt{2}+\delta)\sqrt{\frac{n}{\ln n}},$$

as desired. Here, we used our choice of $n_1$ and that $n\ge n_1$ in the last step. This concludes the proof.
\end{proof}

Next, let us prove the upper bound on the fractional chromatic number of triangle-free graphs with a given number of edges stated in Theorem~\ref{thm:edgebound}. Interestingly, it can be deduced by applying Theorem~\ref{thm:aux} with two different vertex-weight functions following a similar proof idea to Proposition \ref{prop:KP}.
\begin{proof}[Proof of Theorem~\ref{thm:edgebound}]
Let $G$ be any given triangle-free graph with $m$ edges. To prove the upper bound on the fractional chromatic number, w.l.o.g. it suffices to consider the case when $G$ has no isolated vertices. By definition of the fractional chromatic number, we have to show that there exists a probability distribution on the independent sets of $G$ for which a randomly drawn independent set contains any given vertex of $G$ with probability at least $ (1-o(1))(\ln m)^{2/3}/(18 m)^{1/3}$. To construct such a distribution, we consider the following process to generate a random independent set $I$ in $G$. With probability $1/3$ pick $I$ as in Theorem~\ref{thm:aux} with the weight function defined as $w_1(v):=1$ for every $v\in V(G)$, with probability $1/3$ we pick $I$ as in Theorem~\ref{thm:aux} using the weight function $w_2(v):=d_G(v)$ for every $v\in V(G)$, and with probability $1/3$ we pick a random vertex $u$ in $G$ with $\mathbb{P}[u=v]=d_G(v)/2m$ for every $v\in V(G)$ and let $I$ be its neighborhood (which is clearly an independent set in $G$).

It follows that
$$\mathbb{P}[v\in I]\geq \frac{1}{3} f(d_G(v))+\frac{1}{3} f(S_G(v)/d_G(v)) + \frac{1}{3} \frac{S_G(v)}{2m},$$
for every $v\in V(G)$, where $S_G(v)$ denotes the sum of degrees over all neighbors of $v$ in $G$. It suffices to show that the right-hand side is at least $(1-o(1)) (\ln m)^{2/3}/(18 m)^{1/3}$ for all vertices $v$.

Observe that if either $d_G(v)< m^{1/3}$ or $S_G(v)/d_G(v) < m^{1/3}$, then the desired inequality is already satisfied with room to spare from the first and second terms respectively. Otherwise, if $d_G(v)\geq m^{1/3}$ and $S_G(v)/d(v) \geq m^{1/3}$, we have
\begin{align*}
&\frac{1}{3} f(d_G(v))+\frac{1}{3} f(S_G(v)/d_G(v)) + \frac{1}{3} \frac{S_G(v)}{2m}\\
&\qquad = \frac{1}{3} \frac{(1-o(1))\ln d_G(v)}{d_G(v)} +\frac{1}{3} \frac{(1-o(1))\ln(S_G(v)/d_G(v))}{S_G(v)/d_G(v)} + \frac{1}{3} \frac{S_G(v)}{2m}\\
&\qquad \geq \frac{1}{3} \frac{(1-o(1))\ln(m^{1/3})}{d(v)} +\frac{1}{3} \frac{(1-o(1))\ln(m^{1/3})}{S_G(v)/d_G(v)} + \frac{1}{3} \frac{S_G(v)}{2m}\\
&\qquad \geq \left(\frac{(1-o(1))\ln(m^{1/3})}{d_G(v)} \cdot \frac{(1-o(1))\ln(m^{1/3})}{S_G(v)/d_G(v)} \cdot \frac{S_G(v)}{2m}\right)^{1/3},
\end{align*}
where the last line follows by the AM--GM inequality. Simplifying yields a lower bound of $(1-o(1))\left(\frac{\ln(m)^2}{18m}\right)^{1/3}$, as desired.
\end{proof}
Finally, let us give the short deduction of our last main result, namely Theorem~\ref{thm:main4}. We will need the classical Perron-Frobenius theorem on eigenvectors and eigenvalues of matrices, in the following form (see~e.g.~\cite{horn} for a reference).
\begin{theorem}\label{thm:frobenius}
Let $A\in \mathbb{R}^{n\times n}$ be a matrix with non-negative entries. Then the spectral radius $\rho(A)$ is an eigenvalue of $A$, to which there exists an eigenvector $\mathbf{u}\in \mathbb{R}^n$ with non-negative entries.
\end{theorem}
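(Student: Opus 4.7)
\medskip

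\noindent\textbf{Proof proposal for Theorem~\ref{thm:frobenius}.} The plan is to first settle the strictly positive case via a fixed-point argument, and then obtain the general non-negative case by perturbation.

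First I would handle the case where $A$ has only strictly positive entries. Let $\Delta=\{x\in\mathbb{R}^n_{\ge 0}:\sum_i x_i=1\}$ be the standard simplex. Since $A$ is positive and $x\in\Delta$ is non-negative and non-zero, $Ax$ has strictly positive entries, so the map
$$T:\Delta\to\Delta,\qquad T(x)=\frac{Ax}{\|Ax\|_1}$$
is well-defined and continuous. As $\Delta$ is a non-empty, compact, convex subset of $\mathbb{R}^n$, Brouwer's fixed point theorem yields some $u\in\Delta$ with $T(u)=u$, which means $Au=\lambda u$ with $\lambda:=\|Au\|_1>0$ and $u\ge 0$.

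The next step is to verify that $\lambda$ is actually the spectral radius $\rho(A)$. Let $\mu\in\mathbb{C}$ be any eigenvalue of $A$ with eigenvector $v\in\mathbb{C}^n$. Applying the triangle inequality componentwise to $Av=\mu v$ gives $|\mu|\cdot|v|\le A|v|$ (where $|v|$ denotes the componentwise modulus). Applying the fixed-point argument above to $A^T$, which is likewise strictly positive, produces a left Perron vector $w\in\Delta$ with $w^T A=\lambda' w^T$ for some $\lambda'>0$; moreover $w=A^T w/\|A^T w\|_1$ has \emph{strictly} positive entries because $A^T w>0$. One then checks that $\lambda=\lambda'$: multiplying $Au=\lambda u$ by $w^T$ on the left and $w^T A=\lambda' w^T$ by $u$ on the right, both expressions equal $w^T A u$, so $(\lambda-\lambda')w^T u=0$, and $w^T u>0$ since $w>0$ and $u\ge0$, $u\ne 0$. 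Hence $|\mu|\cdot w^T|v|\le w^T A|v|=\lambda w^T|v|$, and since $w>0$ and $|v|\ne 0$ imply $w^T|v|>0$, we deduce $|\mu|\le\lambda$. Thus $\lambda=\rho(A)$, completing the positive case.

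To extend to a general non-negative $A$, I would perturb: let $J$ denote the all-ones matrix and set $A_\varepsilon:=A+\varepsilon J$ for $\varepsilon>0$, which is strictly positive. By the previous paragraphs there exists $u_\varepsilon\in\Delta$ with $A_\varepsilon u_\varepsilon=\rho(A_\varepsilon)\,u_\varepsilon$. Since $\Delta$ is compact, one can choose a sequence $\varepsilon_k\to 0$ along which $u_{\varepsilon_k}\to u^*\in\Delta$. The spectral radius is a continuous function of the matrix entries (as the eigenvalues are the roots of the characteristic polynomial, whose coefficients depend polynomially on the entries, and roots of monic polynomials vary continuously with their coefficients), so $\rho(A_{\varepsilon_k})\to\rho(A)$. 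Passing to the limit in $A_{\varepsilon_k}u_{\varepsilon_k}=\rho(A_{\varepsilon_k})u_{\varepsilon_k}$ yields $A u^*=\rho(A)\,u^*$ with $u^*\ge 0$ and $u^*\ne 0$, which is precisely the claim.

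The main obstacle I would expect is step two, namely identifying the eigenvalue produced by Brouwer with $\rho(A)$. The slick workaround is to simultaneously apply the fixed-point argument to $A^T$ to obtain a \emph{strictly} positive left eigenvector $w$; the strict positivity of $w$ is exactly what makes the inequality $|\mu|\cdot w^T|v|\le\lambda w^T|v|$ collapse to $|\mu|\le\lambda$. Everything else—Brouwer, compactness of $\Delta$, and continuity of $\rho$—is standard.
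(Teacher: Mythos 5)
Your proof is correct. Note, however, that the paper does not prove this statement at all: it is quoted as the classical Perron--Frobenius theorem and cited to a standard reference (Horn--Johnson), so there is no in-paper argument to compare against. What you have written is the standard Brouwer fixed-point proof of the (weak) Perron--Frobenius theorem, and it is complete: the map $T(x)=Ax/\|Ax\|_1$ is well defined on the simplex for strictly positive $A$, Brouwer gives a non-negative eigenvector with eigenvalue $\lambda=\|Au\|_1>0$, and your identification $\lambda=\rho(A)$ via the strictly positive left eigenvector of $A^{T}$ is exactly the right device --- the strict positivity of $w$ is what turns $|\mu|\,w^{T}|v|\le \lambda\, w^{T}|v|$ into $|\mu|\le\lambda$, and the symmetric pairing $w^{T}Au$ correctly shows the left and right eigenvalues coincide. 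The perturbation $A_\varepsilon=A+\varepsilon J$ together with compactness of $\Delta$ and continuity of the spectral radius (roots of the characteristic polynomial depend continuously on the entries) handles the general non-negative case, including the degenerate possibility $\rho(A)=0$. For the paper's application one only needs exactly this weak form (existence of a non-negative eigenvector for $\rho$; strict positivity is then derived separately from connectivity of $G$), so your proof supplies everything that is used.
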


We are now ready for the deduction of Theorem~\ref{thm:main4}.
\begin{proof}[Proof of Theorem~\ref{thm:main4}]
Let $G$ be any given triangle-free graph. We will show that $\chi_f(G)\le \frac{1}{f(\rho(G)))}$, where $f$ is the function defined in Section~\ref{sec:proof}. Since $f(x)=(1-o(1))\frac{\ln(x)}{x}$, this will verify the claim of Theorem~\ref{thm:main4}.

Pause to note that $\chi_f(G)=\max\{\chi_f(G_1),\ldots,\chi_f(G_c)\}$ and similarly $\rho(G)=\max\{\rho(G_1),\ldots,\rho(G_c)\}$ holds for every graph $G$ with connected components $G_1,\ldots,G_c$. Hence, since $f$ is monotonically decreasing, it suffices to show the inequality $\chi_f(G)\le \frac{1}{f(\rho(G))}$ for all \emph{connected} triangle-free graphs. 

So let $G$ be such a graph, and let $A\in \mathbb{R}^{V(G)\times V(G)}$ be its adjacency matrix.  By definition, $A$ has non-negative entries, and hence we may apply Theorem~\ref{thm:frobenius} to find that $\rho(A)=\rho(G)$ is an eigenvalue of $A$ and that there exists a corresponding eigenvector $\mathbf{u}\in \mathbb{R}^{V(G)}$ with non-negative entries. So we have $A\mathbf{u}=\rho(G)\mathbf{u}$, which reformulated means that
$$\sum_{x\in N_G(v)}\mathbf{u}_x=\rho(G)\mathbf{u}_v$$ for every $v\in V(G)$. This equality in particular implies that if at least one neighbor of a vertex $v$ has a positive entry in $\mathbf{u}$, then so does $v$. Hence, since $G$ is a connected graph, it follows that $\mathbf{u}_v>0$ for every $v\in V(G)$. Now interpret the entries of the vector $\mathbf{u}$ as a strictly positive weight assignment to the vertices of $G$. Then, by Theorem~\ref{thm:aux}, there exists a probability distribution $\mathcal{D}$ on the independent sets of $G$ such that for every $v\in V(G)$, we have
$$\mathbb{P}_{I\sim D}[v\in I]\ge f\left(\frac{\sum_{x\in N_G(v)}\mathbf{u}_x}{\mathbf{u}_v}\right)=f(\rho(G)).$$ By definition of the fractional chromatic number, this implies that $\chi_f(G)\le \frac{1}{f(\rho(G))}$, as desired. This concludes the proof.
\end{proof}

\bibliographystyle{abbrvurl}
\bibliography{references}

\end{document}